\documentclass[11pt,a4paper]{article}
\usepackage{amssymb}
\usepackage{amsthm}
\usepackage{amsmath}
\usepackage{amsbsy}
\usepackage[a4paper, left=2.0cm, right=2.0cm, top=2.5cm, bottom=2.5cm]{geometry}
\usepackage[T1]{fontenc}
\usepackage{lmodern}
\usepackage{subfig}
\usepackage{graphicx}
\usepackage{caption}
\usepackage{float}
\usepackage[usenames,dvipsnames]{xcolor}
\usepackage[linktocpage,colorlinks=true]{hyperref}
\usepackage{todonotes}

\usepackage{tikz}
\usetikzlibrary{matrix,arrows,calc,shapes}

\makeatletter
\newtheorem*{rep@theorem}{\rep@title}
\newcommand{\newreptheorem}[2]{%
\newenvironment{rep#1}[1]{%
 \def\rep@title{#2 \ref{##1}}%
 \begin{rep@theorem}}%
 {\end{rep@theorem}}}
\makeatother

\theoremstyle{plain}
\newtheorem{theorem}{Theorem}[section]
\newreptheorem{theorem}{Theorem}
\newtheorem{proposition}[theorem]{Proposition}
\newtheorem{lemma}[theorem]{Lemma}
\newreptheorem{lemma}{Lemma}

\newtheorem{corollary}[theorem]{Corollary}

\theoremstyle{definition}
\newtheorem{definition}[theorem]{Definition}
\newtheorem{remark}[theorem]{Remark}
\newtheorem{example}[theorem]{Example}
\theoremstyle{definition}

\DeclareMathOperator{\cv}{cv}

\newcommand{\R}{\mathbb{R}}

\providecommand{\dgm}{\mathsf{Dgm}}

\providecommand{\per}{\mathsf{Per}}

\newcommand{\str}{\mathsf{Str}}

\newcommand{\sD}{{\mathsf D}}
\newcommand{\sF}{{\mathsf F}}
\newcommand{\sN}{{\mathsf N}}
\newcommand{\sP}{{\mathsf P}}

\newcommand{\cA}{{\mathcal{A}}}

\newcommand{\complexx}[1]{{\mathcal{S}_N^{#1}}}
\newcommand{\complex}{{\mathcal{S}_N}}

\newcommand{\setof}[1]{\left\{ {#1}\right\}}

\def\corcommstyle{\bf\small\tt}


\def\corrl #1<<#2||#3>>{
\if\visiblecomments y
  \begin{quote} {\corcommstyle $<<$COMMENT$>>$ {\color{red}#1\marginpar{!!}}\\$<<$OLD$<<$} \end{quote}

{\color{red} 
 #2
 }

  \begin{quote} {\corcommstyle ==NEW== } \end{quote}
   \noindent\hrulefill
 
\vspace{-10pt} 
 
 \noindent\hrulefill
 
 \vspace{-10pt} 
 
 \noindent\dotfill
 
  #3
  
   \noindent\dotfill 

\vspace{-10pt} 
 
 \noindent\hrulefill
 
 \vspace{-10pt} 
 
 \noindent\hrulefill
  \begin{quote} {\corcommstyle $>>$END$>>$ } \end{quote}
 \else
  #3
 \fi
}

\long\def\longcorrl #1<<#2||#3>>{
\if\visiblecomments y
  \begin{quote} {\corcommstyle $<<$COMMENT$>>$ {\color{red}#1\marginpar{!!}}\\$<<$OLD$<<$} \end{quote}
 
 {\color{red}

  #2
  
  }
  
  \begin{quote} {\corcommstyle ==NEW== } \end{quote}
  
    \noindent\hrulefill
 
\vspace{-10pt} 
 
 \noindent\hrulefill
 
 \vspace{-10pt} 
 
 \noindent\dotfill
 
  #3
  
   \noindent\dotfill 

\vspace{-10pt} 
 
 \noindent\hrulefill
 
 \vspace{-10pt} 
 
 \noindent\hrulefill
  \begin{quote} {\corcommstyle $>>$END$>>$ } \end{quote}
 \else
  #3
 \fi
}

\def\mlabel #1
{
  \if\visiblecomments y
     \marginpar[\flushright \bf \footnotesize #1]{\bf \footnotesize #1}
  \fi
}

\def\flabel #1
{
  \if\visiblecomments y
       \hbox{\bf\footnotesize #1}
  \fi
}


\def\corrq #1<<#2>>{
\if\visiblecomments y
  \begin{quote} {\corcommstyle $<<$COMMENT$>>$ #1\marginpar{!!}\\$<<$BEG$<<$} \end{quote}
  \noindent\hrulefill
 
\vspace{-10pt} 
 
 \noindent\hrulefill
 
 \vspace{-10pt} 
 
 \noindent\dotfill
 
 {\color{red}
  
  #2
  
  }
   
  \noindent\dotfill 

\vspace{-10pt} 
 
 \noindent\hrulefill
 
 \vspace{-10pt} 
 
 \noindent\hrulefill 
  \begin{quote} {\corcommstyle $>>$END$>>$ } \end{quote}
 \else
  #2
 \fi
}

\long\def\longcorrq #1<<#2>>{
\if\visiblecomments y
  \begin{quote} {\corcommstyle $<<$COMMENT$>>$ #1\marginpar{!!}\\$<<$BEG$<<$} \end{quote}
  \noindent\hrulefill
 
\vspace{-10pt} 
 
 \noindent\hrulefill
 
 \vspace{-10pt} 
 
 \noindent\dotfill

  #2

  \noindent\dotfill 

\vspace{-10pt} 
 
 \noindent\hrulefill
 
 \vspace{-10pt} 
 
 \noindent\hrulefill 
  \begin{quote} {\corcommstyle $>>$END$>>$ } \end{quote}
 \else
  #2
 \fi
}


\def\corrc #1<<>>{
\if\visiblecomments y
  \begin{quote} {\corcommstyle $<<$COMMENT$>>$ \color{red} #1\marginpar{!!}} \end{quote}
\fi
}


\def\corre #1<<#2||#3>>{
\if\visiblecomments y
  #3\marginpar{\corcommstyle #1}
 \else
  #3
 \fi
}

\long\def\longcorre #1<<#2||#3>>{
\if\visiblecomments y
  #3\marginpar{\corcommstyle #1}
 \else
  #3
 \fi
}


\def\corrs #1<<#2||#3>>{
\if\visiblecomments y
  #3\marginpar{\corcommstyle #2 $\rightarrow$ #3\\ #1}
 \else
  #3
 \fi
}


\def\corro #1<<#2||#3>>{
#2}

\def\corrn #1<<#2||#3>>{
#3}

\long\def\longcorro #1<<#2||#3>>{
#2}

\long\def\longcorrn #1<<#2||#3>>{
#3}


\long\def\underconstruction #1<<<#2>>>{
\if\visiblecomments y
  \begin{quote} {\corcommstyle $<<$UNDER CONSTRUCTION - BEGIN$>>$ #1\marginpar{!!}} \end{quote}
   {\color{red}
  #2
  }
  \begin{quote} {\corcommstyle $>>$UNDER CONSTRUCTION - END$>>$ } \end{quote}
 \else
 \fi
}


\def\showcomments{
  \let\visiblecomments y
}

\def\hidecomments{
  \let\visiblecomments n
}


\showcomments

\begin{document}

\begin{center}
{\large \bf Contractibility of a persistence map preimage}

 \vskip 0.5cm
{\large
Jacek Cyranka$^{*,\dag}$, Konstantin Mischaikow$^{*}$, and Charles Weibel$^{*}$
}
\vskip 0.5cm
{\small$^*$ Department of Mathematics, Rutgers, The State University of New Jersey,\\110 Frelinghusen Rd, Piscataway, NJ  08854-8019, USA}\\
\vskip 0.5cm
{\small$^\dag$ Institute of Informatics, University of Warsaw,\\ Banacha 2, 02–097 Warsaw, Poland
}
\vskip 0.5cm
{jcyranka@gmail.com, mischaik@math.rutgers.edu, weibel@math.rutgers.edu}
\vskip 0.5cm

\today
\end{center}

\noindent{\bf Abstract.}
This work is motivated by the following question in data-driven study
of dynamical systems: given a dynamical system that is observed via
time series of persistence diagrams that encode topological features
of snapshots of solutions, what conclusions can be drawn about solutions
of the original dynamical system?  
In this paper we provide a
definition of a persistence diagram for a point in $\R^N$.  
We then provide conditions under which
time series of persistence diagrams can be used to guarantee the
existence of a fixed point of the flow on $\R^N$ that generates the
time series.  
To obtain this result requires an understanding of the
preimage of the persistence map.  
The main theorem of this paper gives conditions under which these preimages are contractible simplicial complexes.

\paragraph{Keywords:} {
Topological data analysis, persistent homology, dynamical systems, 
fixed point theorem.}

\section{Introduction}
\label{secintro}
\emph{Topological data analysis} (TDA), especially in the form of
persistent homology, is rapidly developing into a widely used tool for
the analysis of high dimensional data associated with nonlinear
structures.  
That topological tools can play a role in this subject
should not be unexpected, given the central role of nonlinear
functional analysis in the study of geometry, analysis, and
differential equations.  
What is perhaps surprising is that, to the
best of our knowledge, there has been no systematic attempts to develop analogous techniques to process information obtained via persistent homology.

Persistent homology is often used as a means of data reduction.  A
typical example takes the form of a complicated scalar function
defined over a fixed domain, where the geometry of the
sub-(super)-level sets is encoded via homology.  Of particular
interest to us are settings in which the scalar function arises as a
solution to a partial differential equation (PDE); we are interested
in tracking the evolution of the function, but experimental data only
provides information on the level of digital images of the process.
Furthermore,  
capturing the dynamics of a PDE often 
requires a long time series of rather 
large digital images.  Thus, rather than storing
the full images, one can hope to work with a time series of
persistence diagrams.  Our aim is to draw conclusions about the
dynamics of the original PDE from the time series of the persistence
diagrams.  This is an extremely ambitious goal and far beyond our
capabilities at the moment.  A much simpler question is the following:
if there is an attracting region in the space of persistence diagrams,
under what conditions can we conclude that there is a fixed point for
the PDE?

This paper represents a first step towards answering the simpler
question.  Theorem~\ref{thm:maindyn} shows that given an ordinary
differential equation (ODE) with a global compact attractor
$\cA\subset\R^N$ and a neighborhood in the space of persistence
diagrams that is mapped into itself under the dynamics, then there
exists a fixed point for the ODE.  In applications one could consider
the ODE as arising from a finite difference approximation of the PDE.

The challenge is that to obtain results one must understand the
topology of $data_P$, the space  of data having a fixed persistence diagram $P$,
a topic for which there are only limited results.  
That the structure of
$data_P$ is complicated follows directly from the fact that
persistent homology can provide tremendous data reduction, but in a
highly nonlinear fashion.  
With this in mind, the primary goal of this paper is to show that for a reasonable class of problems the space $data_P$ is a finite set of contractible, simplicial sets.
The importance of this result is that it opens the possibility of
applying standard algebraic topological tools, e.g., Lefschetz fixed
point theorem, Conley index, to dynamics that is observed through the
lens of persistent homology.

To state our goal precisely requires the introduction of notation.
Throughout this paper $\complex$ denotes the 1-dimensional simplicial 
complex composed out of $N$ vertices $[i]$ ($i=1,...,N$) and 
$N-1$ edges $[i,i+1]$ ($i=1,....,,N-1$). 
It is a simplicial decomposition of closed bounded interval  in $\R$.

We study filtrations of $\complex$ defined as follows. 

\begin{definition}
\label{def:Sfiltration}
Let $z=(z_1,\ldots,z_N)\in \R^N$.
Define $f\colon \R^N\times\complex\to \R$ by
\[
    f(z,\sigma) := \begin{cases}
    z_j &\text{if $\sigma=[j]$,}\\
    \max\setof{z_j, z_{j+1}}&\text{if $\sigma=[j,j+1]$.}
    \end{cases}
\]
For $r\in \R$, we set 
$
\complex(z,r) := \setof{\sigma\in\complex : f(z,\sigma)\leq r}.
$
\end{definition}

\begin{definition} 
\label{defn:sublevelset}
Given $z=(z_1,\ldots,z_N)\in \R^N$, we can 
reorder the coordinates of $z$ such that
\[
z_{j_1} \leq z_{j_2} \leq \cdots \leq z_{j_N}.
\]
The \emph{sublevel-set filtration\footnote{Analogous results can be obtain for superlevel set filtrations (see Section~\ref{sec:conclusion}).}
of $\complex$ at $z$},
which we write as $\complexx{\sF}(z)$, is given by
\[
   \complex(z,z_{j_1}) \subseteq  \complex(z,z_{j_2}) \subseteq \cdots
   \subseteq \complex(z,z_{j_N}).
\]
\end{definition}

Because $\complexx{\sF}(z)$ is a finite filtration of simplicial complexes, completely determined by $z$, we can use classical results from \cite{comptop, ZC} to compute 
the persistence diagram of $\complexx{\sF}(z)$. 
We treat this as a map
\[
\dgm\colon \R^N\to \per,
\]
where $\per$ denotes the space of all persistence diagrams. 
Thus the space $data_P$  of all $z\in\R^N$ having 
persistence diagram $P$ is just $\dgm^{-1}(P)$.
We remark that there are a variety of topologies that can be put on $\per$ such
that $\dgm$ becomes a continuous map \cite{chazal,stability}.

Since $\complex$ is one-dimensional and 
contractible, we are only concerned with the persistent homology $H_0$,
i.e., the persistence diagrams 
associated with connected components.
Therefore for the rest of the paper we restrict our study to 
consist of the family $\per$ of persistence diagrams of level zero.

Here is the main result of this paper. 

\begin{theorem}
\label{thm:main}
For every persistence diagram $P$, the space $data_P\subset \R^N$ 
is composed of a finite number of mutually disjoint
components. Each component is contractible, and is homeomorphic to a finite union of convex, potentially unbounded polytopes.
\end{theorem}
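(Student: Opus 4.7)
The plan is to stratify $\R^N$ by the combinatorial structure of the sublevel-set filtration, identify the fiber over $P$ inside each stratum as a convex polytope, and then analyze how these polytopes assemble into contractible components.

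First, for $z$ with distinct entries, $\dgm(z)$ is determined by the total ordering of the coordinates together with the path-adjacency in $\complex$: each local minimum of $z$ creates a component, each interior local maximum merges two components, and the elder rule pairs births with deaths to form $\dgm(z)$. I would call two such points equivalent when they produce the same labeled merge tree. The equivalence class $U_\tau$ of a type $\tau$ is a relatively open convex polyhedral cone cut out by strict inequalities $z_i < z_j$, and there are at most $N!$ such types.

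Second, on each stratum $U_\tau$ the persistence diagram is read off from the coordinates at distinguished indices. Writing $P = \setof{(b_0,\infty),(b_1,d_1),\ldots,(b_k,d_k)}$, the type $\tau$ specifies positions $i_0,i_1,\ldots,i_k,j_1,\ldots,j_k$ in $\setof{1,\ldots,N}$, and $\dgm^{-1}(P)\cap U_\tau$ is the affine slice cut out by $z_{i_0}=b_0$, $z_{i_l}=b_l$, $z_{j_l}=d_l$. This is a relatively open convex polytope of dimension $N-2k-1$. Taking closures along ties produces closed convex polytopes, and $\dgm^{-1}(P)$ is their finite union. Two pieces can be adjacent only across walls $\setof{z_i=z_j}$ shared by two types, and they glue precisely when the tied configuration still has persistence diagram $P$; the connected components of $\dgm^{-1}(P)$ are the maximal such unions, which gives the claimed polytopal-complex structure and already accounts for the finiteness, disjointness, and polytopal parts of the theorem.

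The main obstacle, and the chief reason the theorem is non-trivial, is proving contractibility of each component $C$. The strategy I would try first is to construct a canonical ``maximally degenerate'' representative $z^\ast\in C$ by collapsing every free (monotone) coordinate onto a neighboring critical value, so that $z^\ast$ lies in the closure of every piece of $C$, and then to show that the straight-line homotopy $h_t(z)=(1-t)z+t z^\ast$ stays inside $C$ for all $z\in C$ and $t\in[0,1]$. Two ingredients make this plausible: the affine equations imposed by $P$ are linear and hold at both endpoints, so they persist along the homotopy, and the strict inequalities defining the combinatorial types can only degenerate (become equalities) along a straight line, never reverse. The delicate part, and the one that I expect to occupy most of the work, is verifying that every degeneration encountered along $h_t$ yields a tied configuration whose persistence diagram is still exactly $P$, so that the path never leaves $\dgm^{-1}(P)$; this will require a careful case analysis of how the elder-rule merge tree behaves under simultaneous birth or death events, and is the principal technical burden of the proof.
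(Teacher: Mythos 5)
Your stratification-and-affine-slice step is sound and close in spirit to the paper's own decomposition of each component into the polytopes $T(s)$ indexed by cellular strings, so the finiteness, disjointness and polytopal parts of the theorem are within reach of your setup. The gap is in the contractibility step, which is the heart of the theorem: the canonical center $z^\ast$ you want does not exist, and the components are in general not star-shaped, so no straight-line homotopy $h_t(z)=(1-t)z+tz^\ast$ can work. Concretely, take $N=4$ and a critical value sequence $(z_1,z_2,z_3)$ with $z_1<z_3<z_2$, so $P=\{(z_1,\infty),(z_3,z_2)\}$. The component is the union of the four maximal pieces
\[
T(X010)=[z_1,\infty)\times\{z_1\}\times\{z_2\}\times\{z_3\},\quad
T(0X10),\quad T(01X0),\quad
T(010X)=\{z_1\}\times\{z_2\}\times\{z_3\}\times[z_3,\infty).
\]
The first and last pieces are disjoint (a common point would force $z_1=z_2$), so no point lies in the closure of every piece, contrary to your premise. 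Worse, star-shapedness fails outright: every point of the component whose fourth coordinate exceeds $z_3$ lies in $T(010X)$, so if the segment from $z^\ast$ to $(z_1,z_2,z_3,x)$ stays in the component for large $x$, then near that endpoint the first three coordinates are constantly $(z_1,z_2,z_3)$, and being affine in $t$ they are constant along the whole segment, forcing $z^\ast=(z_1,z_2,z_3,\ast)$; symmetrically, seeing the far points $(y,z_1,z_2,z_3)$ of $T(X010)$ forces $z^\ast=(\ast,z_1,z_2,z_3)$, a contradiction since $z_1\neq z_2$. This is exactly the phenomenon of the paper's Figure~\ref{figpre}: interpolating between pieces creates a local minimum strictly between $z_1$ and $z_2$, which changes the diagram. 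So the "delicate case analysis" you defer is not merely delicate -- the target of the proposed homotopy cannot be chosen.

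The paper circumvents this by making the contraction combinatorial rather than radial: each component is identified with the geometric realization $B\str$ of the poset of cellular strings ($T$ is a poset isomorphism onto the polytope decomposition, preserving greatest lower bounds), and contractibility is proved by the poset morphisms $F_\ell$ of Section~\ref{sec:contract}, whose realizations are homotopic to the identity via the natural transformations $\mathrm{id}\Leftarrow h\Rightarrow F_1$ through the greatest-lower-bound map $h$; iterating these retractions along the filtration $Fil_i$ sweeps the component, cell by cell, onto the single string $X\cdots X010\cdots10$. A repaired version of your argument would need a similar piecewise scheme -- collapsing the polytopes one at a time in a suitable order -- rather than a single global straight-line homotopy to one point.
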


The proof of Theorem~\ref{thm:main} is not particularly difficult, but
it is technical. 
We first describe the connected components of $data_P$; 
see Lemma \ref{lemma:local_extrema}. 
In Section \ref{sec:cellular}, we introduce the poset $\str$ of cellular strings, which are be used to decompose each component as a finite union of convex polytopes in Section \ref{sec:polytopes}. 
In Section \ref{sec:contract}, we show that the realization of $\str$ is contractible.

To emphasize that Theorem~\ref{thm:main} is not a trivial result, we use Fig.~\ref{figpre} to demonstrate that 
$data_P$ is not a convex subet of $\R^N$.
In particular, consider the vectors $v=(v_1,\ldots, v_4)$ and  $w=(w_1,\ldots, w_4)$ on the left of Fig.~\ref{figpre}.
It is left to the reader to check that $\dgm(v)=\dgm(w)$ and that this persistence diagram is given by the pair of black dots (see right of Fig.~\ref{figpre}).
Note that the vectors in $\R^4$, indicated 
(on the left) in blue and red,
lie on a straight line from $v$ to $w$.
However, the persistence diagrams indicated 
(on the right) in blue and red clearly differ from $\dgm(v)$. 
Thus, the red and blue vectors do not lie in $data_{\dgm(v)}$.

\begin{figure}[h]
\center{
\includegraphics[width=0.9\textwidth]{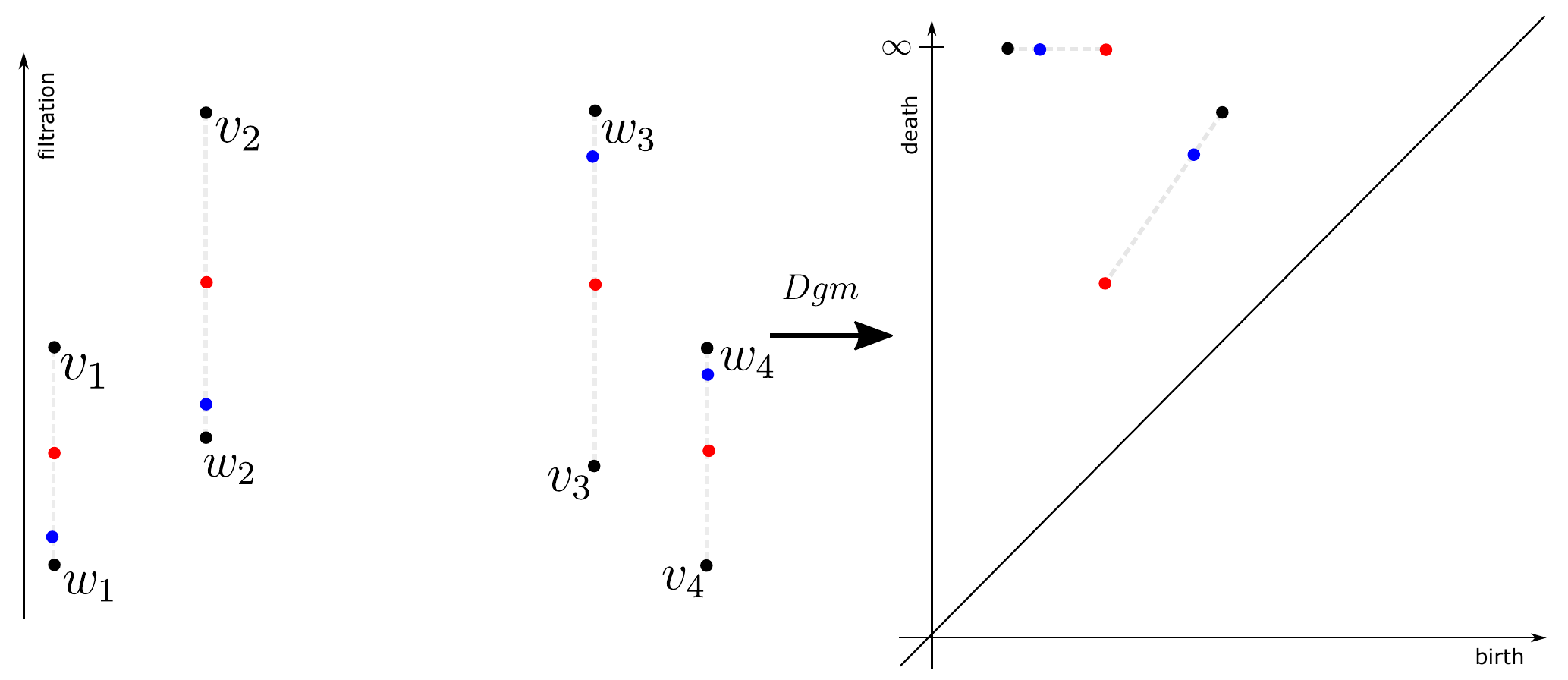}}
\caption{Non-convexity of the preimage $data_P$
under the persistence map $\dgm$. 
In the left figure the two vectors, 
$v=(v_1,...,v_4)$ and 
$w=(w_1,...,w_4)$,
lie in the preimage of the persistence diagram $P$, composed out of two black points visible on the right figure. 
Applying $\dgm$ to convex linear combinations of $v$ and $w$ results in a path in the
persistence plane illustrated on the right
(The convex path is marked in grey, and two sample vectors on the path are 
in red and blue).}
\label{figpre}
\end{figure}

In Section~\ref{sec:dynamics} we apply Theorem~\ref{thm:main} to prove the existence of fixed points with given persistence diagrams for a dissipative ordinary differential equation.

\subsection{Acknowledgements}
The work of JC and KM was partially supported by grants NSF-DMS-1125174, 1248071, 1521771 and a DARPA contracts HR0011-16-2-0033.
CW was supported by NSF grant 1702233.
In addition KM was partially supported by DARPA contract FA8750-17-C-0054, NIH grant R01 GM126555-01, and NSF grants 1934924, 1839294, and 1622401. JC was partially supported by NAWA Polish Returns grant PPN/PPO/2018/1/00029. 

The first two authors are grateful to an anonymous reviewer of a dramatically different version of this paper for suggesting the relation of our efforts to $K$-theory, which greatly simplified the proof of Theorem~\ref{thm:main}.

\section{Invariants for a fixed persistence diagram.}

Fix a persistence diagram $P$.
To describe the structure of the 
space $data_P$, we introduce
two levels of invariants:
the critical value sequences, representing the connected
components of $data_P$, and
(for each of these), a partially ordered set $\str$ indexing a polytope decomposition of 
the component.

\subsection{Components}

Fix a persistence diagram $P$.
To describe the (finitely many) connected components of $data_P$, it is useful to introduce notation that records the order in which the relevant
local maxima and minima occur.

We say that $z=(z_1,\dots,z_N)\in\R^N$ is a \emph{typical point} if its coordinates are distinct. 
If $z$ is a typical point and $1<n<N$, we say that $z_n$ is a 
\emph{local minimum} (of $z$) 
if $z_{n-1}>z_n <z_{n+1}$, and 
a \emph{local maximum} if $z_{n-1}<z_n >z_{n+1}$; it is a \emph{local extremum} if it is a local minimum or maximum.  
We say that $z_1$ and $z_N$ are \emph{boundary extrema}; $z_1$ is a local minimum (resp., maximum) if $z_1<z_2$ (resp., $z_1>z_2$). 

\begin{definition}
\label{def:cv(z)}
The \emph{critical value sequence} of a typical point $z=(z_1,\dots,z_N)$ is
\[
\cv(z) = \left(z_{n_1},\ldots, z_{n_K}\right)\in \R^K,
\] 
where the $z_{n_k}$ are the local extrema of $z$, excluding boundary extrema that are local maxima,
and $n_1 < n_2 < \ldots< n_K$.
\end{definition}

\begin{example}
\label{ex:cv(z)}
Let $z=(1.5,-0.9,1.1,2.1,1.4)\in \R^5$.
The local minimum is $z_2$ and the local maximum is $z_4$. 
The boundary extrema are $z_1$ and $z_5$.
Since $z_1$ is also a local maximum we do not include it in the critical value sequence.
Thus $h=\cv(z) = (-0.9,2.1,1.4)$. 
\end{example}

The following notion emphasizes the structure of the critical value sequences. 
\begin{definition}
\label{def:cv}
A $010$ \emph{critical value sequence} of (odd) length $K$ is a vector $\textrm{cv}=(z_1,\dots,z_K)\in\R^K$ with the property that 
 \[
 z_{n_1} < z_{n_2} > z_{n_3} < \cdots < z_{n_{K-1}} >z_{n_K}.
 \]
A $101$ \emph{critical value sequence} is defined similarly, with the inequalities reversed.
\end{definition}
Since we are using sublevel set filtrations to compute the persistence diagram we focus on $010$ critical value sequences.

Lemma \ref{lemma:local_extrema} 
below shows that the local extrema of $z$ are determined up to order by its
persistence diagram, and hence that there are only finitely many critical value sequences for any fixed persistence diagram.

Recall that a persistence diagram
is a finite collection of {\it persistence points} 
$\setof{p_i = (p_i^b, p_i^d)}$, 
where $p_i^b$ and $p_i^d$ denote birth and death values, respectively.
Since $\complex$ is connected, the persistence diagram of a typical point 
$z$ has a unique persistence point $p_i=(p_i^b,p_i^d)$ such that
$p_i^b=\min_{n=1,\ldots, N}z_n$ and $p_i^d = \infty$;
without loss of generality, we may relabel $p_i$ as $p_1$.

\begin{lemma}
\label{lemma:local_extrema}
Let $z\in\R^N$ be a typical point with persistence diagram
$\setof{p_m = (p_m^b, p_m^d) \mid m=1,\ldots,M}$. 
Then, $z$ has $K=2M-1$ local extrema; the local minima of $z$ are precisely $\setof{p_m^b}_{m=1}^M$
and the interior local maxima of $z$ are precisely $\setof{p_m^d}_{m=2}^M$.
\end{lemma}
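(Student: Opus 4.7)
The plan is to trace how $H_0(\complex(z,r))$ evolves as $r$ increases through the (distinct) coordinate values of $z$, and to classify each transition by the local shape of $z$ at the vertex being inserted. Since $\complex$ is one-dimensional, for any $r$ not coinciding with some $z_j$ the complex $\complex(z,r)$ is a disjoint union of sub-intervals indexed by the maximal consecutive runs $\setof{j:z_j\le r}$, so $H_0$ can change only at the values $r=z_n$. The key identity driving the case analysis is that the edge $[j,j+1]$ enters the filtration at $\max(z_j,z_{j+1})$, i.e., at its later endpoint.

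From this I would derive a short case analysis at $r=z_n$. At an interior local minimum ($z_{n-1}>z_n<z_{n+1}$), vertex $[n]$ is inserted in isolation because both incident edges enter strictly later, producing a birth at $z_n$; a boundary local minimum behaves identically since its sole incident edge enters strictly later. At an interior local maximum ($z_{n-1}<z_n>z_{n+1}$), both incident edges enter simultaneously with $[n]$, gluing the two pre-existing components containing $[n-1]$ and $[n+1]$ into one, so a class dies at $z_n$. At a boundary local maximum, vertex $[n]$ enters together with its unique incident edge and merely extends the already-present neighboring component, so $H_0$ is unchanged. Finally, at a monotone interior index ($z_n$ between its neighbors), vertex $[n]$ enters with the single incident edge going down to the already-present lower neighbor and again just extends a pre-existing component.

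Putting these together, the multiset of birth values of $\dgm(\complexx{\sF}(z))$ equals the multiset of local minima of $z$ (interior and boundary), and the multiset of finite death values equals the multiset of interior local maxima. Since $\complex$ is connected, the fully assembled complex has a single component, so among the $M$ births exactly one remains unpaired (producing $p_1^d=\infty$) and the other $M-1$ pair with interior local maxima. Counting gives $K=M+(M-1)=2M-1$. The only delicate point is the asymmetric treatment of boundary vertices: both boundary and interior local minima produce a birth, but a boundary local maximum produces no death at all, which is exactly what forces the $010$ convention in Definition~\ref{def:cv} and justifies excluding boundary maxima from $\cv(z)$. Once the edge-entry rule $\max(z_j,z_{j+1})$ is firmly in mind, each case is immediate; the main obstacle is merely enumerating the boundary cases carefully and not overlooking that a boundary maximum creates no new topology.
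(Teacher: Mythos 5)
Your proof is correct; in fact the paper supplies no argument for this lemma (it is explicitly left to the reader), and your case analysis is exactly the intended one. Tracking $H_0$ of the sublevel complexes, using that an edge $[j,j+1]$ enters at $\max(z_j,z_{j+1})$, shows births occur precisely at (interior and boundary) local minima, deaths precisely at interior local maxima, with boundary maxima and monotone indices leaving $H_0$ unchanged; connectedness of the full complex leaves one unpaired birth, giving $M$ minima, $M-1$ interior maxima, and $K=2M-1$, which also explains why boundary maxima are omitted from $\cv(z)$.
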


We leave the proof of Lemma \ref{lemma:local_extrema}
to the reader, 
remarking that it
still holds when $z\in\R^N$ is not a typical point, except that the
persistence diagram may be a multiset 
(there may be multiple copies of a single persistence point).

Given a point $z$ with persistence diagram $P$,
let $C(z)$ denote the component of $data_P$ containing $z$.

The following lemma shows that $data_P$ is the disjoint union of the finitely many disjoint
components $C(z)$, indexed by the critical value sequences.  
The proof follows from the observation that the order of the local extrema cannot be changed while preserving the persistence diagram.

\begin{lemma}
\label{lem:fixComponent}
If $z$ and $z'$ are typical points in $\R^N$ then $C(z)=C(z')$ if and only if
$\cv(z) = \cv(z')$.

Moreover,
$C(z)$ is the closure of the set of typical points in $C(z)$.
\end{lemma}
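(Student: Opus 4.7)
The plan is to establish the biconditional in both directions and then the ``moreover'' density statement, which is actually needed for the forward implication.

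For the forward direction $C(z)=C(z')\Rightarrow \cv(z)=\cv(z')$, I would first observe that $\cv$ is locally constant on the set of typical points in $data_P$. Indeed, typicality is an open condition, so on a small neighborhood of a typical $z$ both the positions $n_1<\cdots<n_K$ of the local extrema and the min/max assignment persist, since the defining strict inequalities are preserved under small perturbations. If such a perturbation remains in $data_P$, Lemma~\ref{lemma:local_extrema} pins the multiset of extremum values to $\{p_m^b\}\cup\{p_m^d\}_{m\geq 2}$; as this is a fixed finite multiset and each coordinate varies continuously, the value at each fixed extremum position is forced to be locally constant, and so is $\cv$. Combined with the density of typical points in $C(z)$ (the ``moreover'' statement, proved below), the locally constant function $\cv$ is constant on $C(z)$.

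For the converse $\cv(z)=\cv(z')\Rightarrow C(z)=C(z')$, I would construct a continuous path in $data_P$ from $z$ to $z'$. If the positions of the local extrema already agree, a straight-line homotopy fixing the extremum values and linearly interpolating the non-extremum coordinates preserves monotonicity between consecutive extrema, hence preserves the entire filtration and $\dgm$. When the positions differ, I would connect the two position patterns by a finite sequence of elementary moves, each sliding a single extremum by one index by passing through a non-typical configuration in which two adjacent coordinates momentarily coincide at the extremum value. The essential check is that $\dgm$ is unchanged through such a move: the extremum values carry the birth/death levels (unchanged) and the spatial order recorded by $h$ determines the elder-rule pairing (also unchanged), so the output remains $P$.

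For the ``moreover'' statement, I would argue typical points are dense in $C(z)$ by a transversality argument. Locally, membership in $data_P$ can be written as equalities identifying prescribed coordinate maxima with the values $h_k$, together with inequalities enforcing the filtration order; this cuts out a union of closed convex polyhedral pieces of $\R^N$. Typical points are the complement of the finite hyperplane arrangement $\{z_i=z_j\}$, so a generic small perturbation of any $z''\in C(z)$ within the polyhedral description is typical and, by continuity, remains in the same component. The main obstacle in this plan is verifying invariance of $\dgm$ across the position-shift moves: at the pinch moment, several simplices enter the filtration simultaneously at the same level and the elder rule must be tracked with care. The polytope decomposition of Section~\ref{sec:polytopes} provides exactly the right bookkeeping, since position-shift pinches appear as shared faces of adjacent polytopes inside a single component, making the preservation of $\dgm$ automatic by construction.
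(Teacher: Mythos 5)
Your forward implication ($C(z)=C(z')\Rightarrow \cv(z)=\cv(z')$) has a genuine gap, and it sits exactly where the paper's own (one-line) proof puts the whole content of the lemma. You show $\cv$ is locally constant on the typical points of $data_P$ and then conclude, ``combined with the density of typical points in $C(z)$,'' that $\cv$ is constant on $C(z)$. That inference is invalid: a function locally constant on a dense open subset of a connected set need not be constant, because the dense subset can be disconnected --- and here it genuinely can be: inside a single component the typical locus is a union of polytopes with the diagonal hyperplanes $\{x_i=x_j\}$ removed, and this breaks into many chambers (a non-extremal coordinate may cross the value of a distant extremum without affecting $\cv$, so local constancy on the typical locus alone proves nothing about how chambers relate). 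The real issue is what happens when a path inside $data_P$ crosses the non-typical locus, which is precisely the observation the paper's proof rests on: \emph{the spatial order of the local extremum values cannot change while the persistence diagram remains $P$}. Lemma~\ref{lemma:local_extrema} (which, as remarked, persists at non-typical points) pins the extremum values along any path in $data_P$ to the finite set of birth/death values of $P$; what still must be ruled out is a degenerate instant at which extrema exchange their order, merge, or are created/annihilated --- for instance, at the moment two interior maxima would swap, their levels coincide and the resulting diagram (now a multiset with a doubled death value) is no longer $P$. Your proposal never makes this degenerate-configuration argument; the elementary moves you analyze in the converse direction slide extremum \emph{positions} while keeping the value sequence fixed, and say nothing about why the value sequence itself is rigid along arbitrary paths in $data_P$.

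The converse direction and the ``moreover'' statement are essentially sound sketches and are in the spirit of the paper, which proves the lemma only via the observation above and, implicitly, via the decomposition $C(z)=\bigcup_s T(s)$ of Section~\ref{sec:polytopes}. Two cautions, though. First, you defer ``the essential check that $\dgm$ is unchanged through a pinch'' to the polytope decomposition being ``automatic by construction''; but the statement that every point of $T(s)$ has diagram $P$ is itself the assertion needing verification (the paper states it, without proof, only after this lemma), so a direct elder-rule computation at the pinch configuration is what your argument actually requires. Second, in the density argument, a generic perturbation of $z''$ ``within the polyhedral description'' is typical only if the local piece through $z''$ is not contained in some hyperplane $\{x_i=x_j\}$; this is true here because every piece is a face of a top-dimensional $T(s)$ whose free coordinates vary over intervals of positive length, but it must be said --- there are preimage components (for other diagrams) containing no typical points at all, so the genericity claim is not automatic.
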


\noindent This proves the first assertion in  Theorem \ref{thm:main}.

\begin{remark}
The components $C(z)$ group vectors into equivalence classes that can be characterized
using the notion of \emph{chiral merge tree} as defined in \cite{curry}. 
Corollary 5.5 of \cite{curry}] 
shows that the number of chiral merge trees realizing diagram $P$ is equal to $2^{N-1}\prod_{j=2}^N{\mu_B(I_j)}$, where $B$ is the barcode realization of $P$, i.e. set of intervals $I_j= [b_j,d_j]$ having the birth and death values of the $j$-th persistence point as its endpoints, and $\mu_B(I_j)$ is the number of intervals in $B$ that contain $I_j$. 
\end{remark}

\subsection{Cellular strings}
\label{sec:cellular}

In this section, we define the poset $\str(N,M)$ of \emph{cellular strings} associated to $M$ points arising from a vector in $\R^N$.
Thus we fix $N$ and $M$, where $N\geq 2M-1$.

Consider a string of symbols $s=s_1\cdots s_N$ of length $N$, where each symbol $s_n$ is either $0$, $1$, or $X$ (we refer to $0$ and $1$ as \emph{bits}). 
Any such string can be represented as $s=\gamma_1\cdots \gamma_J$ where each block $\gamma_j$ is a substring made up of a single symbol (that is, $\gamma_j$ is $0\cdots 0$, $1\cdots 1$, or $X\cdots X$),
and consecutive blocks have different symbols.
We refer to $s=\gamma_1\cdots \gamma_J$ as the \emph{canonical representation} of $s$.

\begin{definition}
\label{def:cellular}
Fix $M<N$. 
A $010$ \emph{cellular string}\footnote{A $101$ cellular string is defined similarly, interchanging $0$ and $1$.}
is a symbol string $s$ of length $N$ such that, 
for the canonical representation 
$s=\gamma_1\cdots \gamma_J$:
\begin{enumerate}
    \item[(i)] the symbols that make up $\gamma_j$ and $\gamma_{j+1}$ are different;
    \item[(ii)] $\gamma_1$ and $\gamma_J$ consist of the symbols $0$ or $X$;
    \item[(iii)] if $\gamma_j$ consists of the symbol $X$, then the symbol of $\gamma_{j-1}$ is different from the symbol of $\gamma_{j+1}$;
    \item[(iv)] there are exactly $M$
    values of $j$ for which $\gamma_j$ consists of the symbol $0$. 
\end{enumerate}
The set $\str(N,M)$ of cellular strings is a poset,
where $s'< s$ if the string $s$ is obtained from $s'$ by replacing some of the bits $0$ and $1$ in $s'$ by $X$.

The \emph{dimension} of a cellular string $s$,
$\dim(s)$, is the number of symbols $X$ in $s$. 
It follows from (iv) that $M$ of the blocks $\gamma_j$ have the form $0\cdots0$, and $M-1$ have the form $1\cdots1$. 
Thus, $K=2M-1$ of the blocks are bitstrings.  
If these bitstrings are $\gamma_{j_1}, \cdots, \gamma_{j_K}$,
then the symbol for $\gamma_{j_k}$ is 
$0$ if $k$ is odd and $1$ if $k$ is even. 
Since each block has at least one symbol, 
it follows that any cellular string has 
dimension at most $L=N-K$.

We write $\str^{(r)}(N,M)$ for
the sub-poset of all cellular strings 
whose first $r-1$ symbols are $X$.
Note that $\str(N,M) = \str^{(1)}(N,M)$
and $\str^{(L+1)}(N,M)=\{X\cdots X010\cdots10\}$.
\end{definition}

\begin{proposition}
An element of $\str(N,M)$ is  maximal if and only if it is an $L$-dimensional cellular string,
where $L = (N-K)$.
\end{proposition}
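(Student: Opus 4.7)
The plan is to prove both directions by analyzing the effect of replacing a single bit by $X$ on the block structure, starting from the observation that the $K = 2M-1$ bit blocks of any $s\in\str(N,M)$ with canonical representation $\gamma_1\cdots\gamma_J$ are forced to alternate as $0,1,0,\ldots,1,0$. This alternation follows from (iv), which fixes the number of $0$-blocks at $M$; (i), which forces adjacent blocks to differ; and (iii), which forces the two bit-block neighbors of each interior $X$-block to have opposite symbols. Since each bit block contributes at least one symbol, $\dim(s)\leq L = N-K$, with equality precisely when every bit block has length one.

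For the ``if'' direction, I would suppose $\dim(s)=L$, so each bit block $\gamma_{j_k}$ consists of a single bit, and try to show that no replacement of a bit by $X$ yields another cellular string. Replacing the lone bit of $\gamma_{j_k}$ by $X$ destroys this bit block; after merging with any adjacent $X$-blocks, the new or extended $X$-block is flanked (on the interior side) by the two bit blocks that used to surround $\gamma_{j_k}$, both of which carry the symbol opposite to $\gamma_{j_k}$'s and are therefore equal. This violates (iii), except when $\gamma_{j_k}$ lies at a boundary, in which case the $0$-block count drops and (iv) fails directly. Hence $s$ is maximal.

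For the ``only if'' direction, I would suppose $\dim(s)<L$, so that $s$ has strictly more than $K$ bit symbols and hence, by pigeonhole, some bit block $\gamma_{j_k}$ has length at least two. I would replace the leftmost symbol of $\gamma_{j_k}$ by $X$ and split into three subcases: (a) $\gamma_{j_k-1}$ is an $X$-block, so the new $X$ extends it while $\gamma_{j_k}$ shrinks by one but remains nonempty; (b) $\gamma_{j_k-1}$ is a bit block of the opposite symbol, so a new length-one $X$-block is inserted between the two bit blocks, and (iii) is preserved because its neighbors already had opposite symbols; (c) $\gamma_{j_k}$ is the first block, so a new $X$-block of length one appears at the start, consistent with (ii). In each case the resulting string satisfies (i)--(iv), producing an $s'>s$ and witnessing the non-maximality of $s$.

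The main obstacle is handling the boundary cases cleanly, since condition (iii) is formulated in terms of both neighbors of an $X$-block and becomes vacuous when $\gamma_j$ is $\gamma_1$ or $\gamma_J$; once the convention that boundary $X$-blocks are automatically admissible is made explicit, the verification in each case is routine bookkeeping, and no ideas beyond the block-alternation structure and the pigeonhole argument are required.
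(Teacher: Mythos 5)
Your argument for the non-maximality direction coincides with the paper's: locate a bit block of length at least two, replace its leftmost symbol by $X$, and check that the result is again a cellular string lying above $s$; your case split (a)--(c) simply makes explicit the verification the paper leaves implicit. The place you deviate is the ``if'' direction, and as written it has a small logical gap: you only rule out strings obtained from $s$ by replacing a \emph{single} bit by $X$, but the partial order on $\str(N,M)$ allows replacing an arbitrary nonempty set of bits at once, and the intermediate one-bit replacements need not themselves be cellular strings, so excluding covers of this special form does not by itself establish maximality. (Your case analysis is also slightly imprecise at the edges: when the deleted bit block is the first or last \emph{bit} block but not the first or last block of the string --- e.g. passing from $010XX$ to $01XXX$ --- condition (iii) is vacuous and it is (iv) that fails.) The repair is immediate and is exactly the paper's route, using a fact you already state: every cellular string has exactly $K=2M-1$ bit blocks, each of length at least one, so $\dim(s)\le L=N-K$; since $s<s''$ forces $\dim(s)<\dim(s'')$, no cellular string can lie strictly above an $L$-dimensional one. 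With that one observation, the entire single-bit-replacement analysis in your ``if'' direction becomes unnecessary.
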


\begin{proof}
Let $s=\gamma_1\dots\gamma_J\in \str(N,M)$.
By definition, $\dim(s)\le L$.
Conversely, suppose that the symbol $X$ appears in $s$ has less than $L$ times.
Then some bitstring $\gamma_j$ has length $\ge2$. 
Let $s'$ be the cellular string obtained by replacing the first symbol of $\gamma_j$ by $X$.  Then $s<s'$, so $s$ is not maximal.
\end{proof}

Since both $N$ and $M$ are fixed in our analysis, we simplify the notation and write $\str$ for $\str(N,M)$.
Figure \ref{fig:K=2N=3} illustrates the poset $\str$
when $M=2$, $K=3$ and $N=5$; the right column is $\str^{(2)}$.

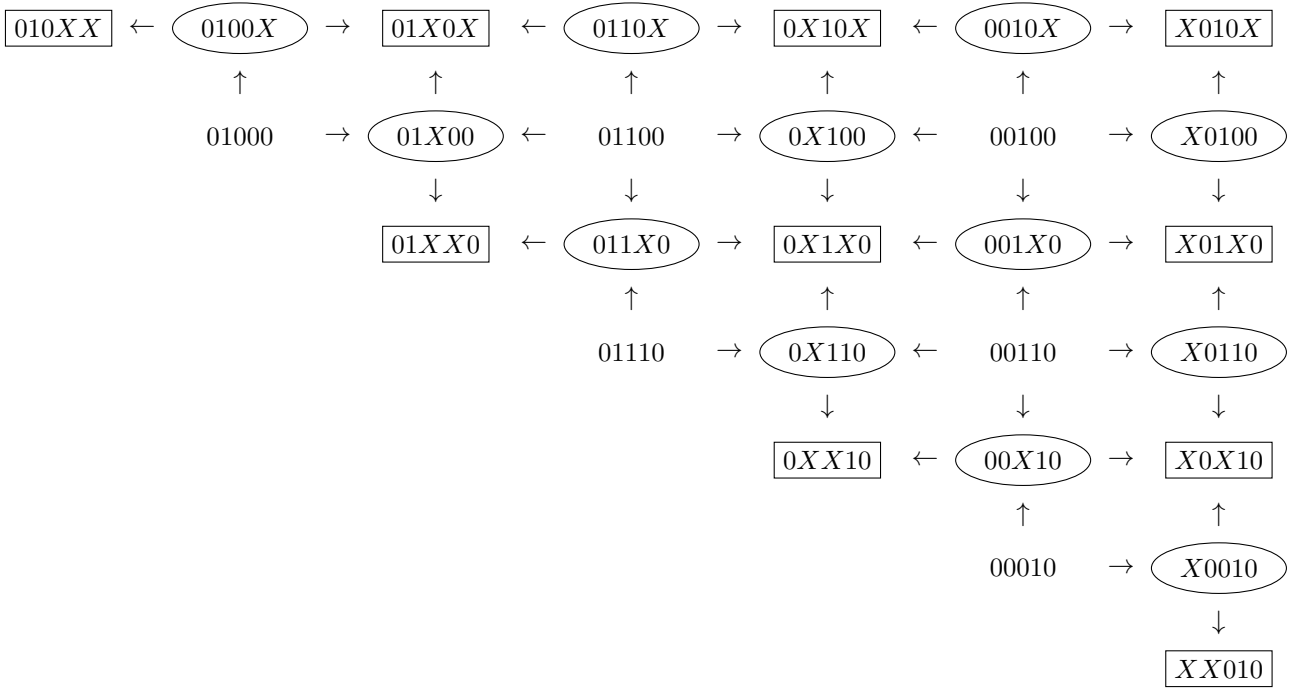
\begin{figure}
{\small
    \begin{tikzpicture}
    \matrix[nodes={draw},row sep=0.1cm, column sep=0.1cm]{
    \node[rectangle] {$010XX$}; & \node[draw=none] {$\leftarrow$}; &
    \node[ellipse] {$0100X$}; & \node[draw=none] {$\rightarrow$}; &
    \node[rectangle] {$01X0X$}; & \node[draw=none] {$\leftarrow$}; &
    \node[ellipse] {$0110X$}; & \node[draw=none] {$\rightarrow$}; &
    \node[rectangle] {$0X10X$}; & \node[draw=none] {$\leftarrow$}; &
    \node[ellipse] {$0010X$}; & \node[draw=none] {$\rightarrow$}; &
    \node[rectangle] {$X010X$}; \\
    &     &
    \node[draw=none] {$\uparrow$}; &    &
    \node[draw=none] {$\uparrow$}; &    &
    \node[draw=none] {$\uparrow$}; &    &
    \node[draw=none] {$\uparrow$}; &    &
    \node[draw=none] {$\uparrow$}; &    &
    \node[draw=none] {$\uparrow$};\\
    &    &
    \node[draw=none] {$01000$}; & \node[draw=none] {$\rightarrow$}; &
    \node[ellipse] {$01X00$}; & \node[draw=none] {$\leftarrow$}; &
    \node[draw=none] {$01100$}; & \node[draw=none] {$\rightarrow$}; &
    \node[ellipse] {$0X100$}; & \node[draw=none] {$\leftarrow$}; &
    \node[draw=none] {$00100$}; & \node[draw=none] {$\rightarrow$}; &
    \node[ellipse] {$X0100$}; \\
    &    &    &    &
    \node[draw=none] {$\downarrow$}; 
    &    &
    \node[draw=none] {$\downarrow$}; &  &
    \node[draw=none] {$\downarrow$}; &  &
    \node[draw=none] {$\downarrow$}; &  &
    \node[draw=none] {$\downarrow$};  \\
    &    &    &     &
    \node[rectangle] {$01XX0$}; & \node[draw=none] {$\leftarrow$}; &
    \node[ellipse] {$011X0$}; & \node[draw=none] {$\rightarrow$}; &
    \node[rectangle] {$0X1X0$}; & \node[draw=none] {$\leftarrow$}; &
    \node[ellipse] {$001X0$}; & \node[draw=none] {$\rightarrow$}; &
    \node[rectangle] {$X01X0$}; \\
    &    &    &    &    &    &
    \node[draw=none] {$\uparrow$}; &    &
    \node[draw=none] {$\uparrow$}; &    &
    \node[draw=none] {$\uparrow$}; &    &
    \node[draw=none] {$\uparrow$}; \\
    &    &    &    &     &    &
    \node[draw=none] {$01110$}; &  \node[draw=none] {$\rightarrow$}; &
    \node[ellipse] {$0X110$}; & \node[draw=none] {$\leftarrow$}; &
    \node[draw=none] {$00110$}; & \node[draw=none] {$\rightarrow$}; &
    \node[ellipse] {$X0110$}; \\
    & & & & & & & &
    \node[draw=none] {$\downarrow$}; & &
    \node[draw=none] {$\downarrow$}; & &
    \node[draw=none] {$\downarrow$};   \\
    & & & & & & & &
    \node[rectangle] {$0XX10$}; & \node[draw=none] {$\leftarrow$}; &
    \node[ellipse] {$00X10$}; &  \node[draw=none] {$\rightarrow$}; &
    \node[rectangle] {$X0X10$}; \\
    & & & & & & & & & &
    \node[draw=none] {$\uparrow$}; & &
    \node[draw=none] {$\uparrow$}; \\
    & & & & & & & & & & 
    \node[draw=none] {$00010$}; & \node[draw=none] {$\rightarrow$}; &
    \node[ellipse] {$X0010$}; \\
    & & & & & & & & & & & &
    \node[draw=none] {$\downarrow$}; \\
    & & & & & & & & & & & &
    \node[rectangle] {$XX010$}; \\
    };
		\end{tikzpicture}
    \caption{The string poset $\str$ for $M=2$ and $N=5$. Two-dimensional, one-dimensional, and zero-dimensional strings are surrounded by rectangles, ellipses, and nothing, respectively.  The arrows indicate the partial order. The rightmost column is the sub-poset $\str^{(2)}$.}
    \label{fig:K=2N=3} }
\end{figure}

\begin{lemma}
\label{lem:glb}
Every string $s'$ is the greatest lower bound of the set of $L$-dimensional strings $s$ with $s'<s$.
\end{lemma}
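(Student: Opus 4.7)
The plan is to exhibit the maximal extensions of $s'$ explicitly and then verify that their greatest lower bound in $\str$ equals $s'$. The case $\dim(s')=L$ (i.e.\ $s'$ already maximal) is immediate, so I assume $\dim(s')<L$. Write $s' = \gamma_1\cdots\gamma_J$ in canonical form, let $\gamma_{j_1},\ldots,\gamma_{j_K}$ (with $K=2M-1$) be the bitstring blocks, and let $\ell_1,\ldots,\ell_K$ be their lengths; then $\dim(s') = N - \sum_k \ell_k$, so at least one $\ell_k \ge 2$.

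First I would construct, for each tuple $(p_1,\ldots,p_K)$ with $1\le p_k\le \ell_k$, a string $s(p_1,\ldots,p_K)$ obtained from $s'$ by replacing, in each bitstring block $\gamma_{j_k}$, every bit other than the one at local index $p_k$ by the symbol $X$. The main piece of bookkeeping, and the only real obstacle in the whole proof, is to check that each such $s(p_1,\ldots,p_K)$ actually lies in $\str$: the newly inserted $X$'s may merge with adjacent $X$-blocks, or may split a long bitstring block into three blocks, so the canonical decomposition genuinely changes. Conditions (i)--(iv) of Definition \ref{def:cellular} nevertheless survive, since the alternating $0,1,\ldots,1,0$ pattern of bitstring blocks is preserved, any $X$-block created or enlarged still has bitstring blocks with distinct symbols on its two sides (directly, or via condition (iii) applied to $s'$), and the count of $0$-blocks remains $M$. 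A direct count of symbols then yields $\dim s(p_1,\ldots,p_K) = L$ and $s' \le s(p_1,\ldots,p_K)$, so $s'$ is a lower bound of $T := \{s(p_1,\ldots,p_K)\}$.

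It then remains to show that every lower bound $t$ of $T$ satisfies $t \le s'$, i.e.\ that the $X$-positions of $t$ are contained in those of $s'$ and that $t$ agrees with $s'$ at each bit position of $s'$. For the first containment, my plan is to observe that the intersection of the $X$-position sets across all elements of $T$ equals the $X$-position set of $s'$: for any bit position $n$ of $s'$ lying in $\gamma_{j_k}$ at local index $p$, the extension with $p_k=p$ keeps a bit at $n$, excluding $n$ from the intersection. For the agreement on bits, at such an $n$ I pick the same extension $s\in T$ keeping the bit at $n$; then $s_n = s'_n$, and since $t \le s$ and $t$ has a bit at $n$ by what has just been shown, we get $t_n = s_n = s'_n$. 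Combining the two facts yields $t \le s'$, identifying $s'$ as the greatest lower bound of $T$.
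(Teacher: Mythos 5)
Your proof is correct, but it takes a genuinely different route from the paper. The paper argues by downward induction on $\dim(s')$: it observes that every cover of $s'$ (a string one dimension higher) is obtained by replacing the first or last bit of some bitstring block of length $\ge 2$ by $X$, and lets the induction propagate the greatest-lower-bound property from covers up to the $L$-dimensional strings. You instead bypass induction entirely: you explicitly construct, for each choice of one retained bit per bitstring block of $s'$, an $L$-dimensional string above $s'$, verify membership in $\str$ (the alternating $0,1,\dots,1,0$ pattern of retained bits handles conditions (i)--(iv), including (iii) for merged or newly created $X$-blocks), and then compute the greatest lower bound positionwise: any lower bound $t$ of your family must agree with $s'$ at every bit position of $s'$, because some member of the family retains that bit, and $t\le s$ forces $t_n=s_n$ there; this already gives $t\le s'$. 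Your approach buys an explicit description of (enough of) the maximal strings over $s'$ and a self-contained, non-inductive verification, which is arguably more detailed than the paper's terse argument; the paper's approach is shorter and reuses the cover structure ($s_1$, $s_2$ obtained by sliding an $X$ to either end of a long block) that also appears in the proof of Lemma \ref{lem:Poly}. One cosmetic point common to both treatments: when $\dim(s')=L$ the set of strictly larger $L$-dimensional strings is empty, so the statement should be read with $s'\le s$ (or restricted to non-maximal $s'$); like the paper, you dismiss this case as immediate, which is fine.
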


It follows that $\str$ has 
the least upper bound property: 
if two strings have a lower bound, 
they have a greatest lower bound.

\begin{proof}
We proceed by downward induction on the dimension $d$ of $s'$, the case $d=L$ being clear. 
Consider the canonical representation, $s'= \gamma_1\cdots \gamma_J$.
If $d<L$, then some bitstring $\gamma_j$ has length $\ge2$. 
Consider the strings
$s_1=\gamma_1\cdots\gamma_{j-1}X\bar{\gamma}\gamma_{j+1}\cdots \gamma_J$ and $s_2=\gamma_1\cdots\gamma_{j-1}\bar{\gamma} X\gamma_{j+1}\cdots \gamma_J$
where $\bar{\gamma}$ is a bitstring consisting of the same symbol as $\gamma_j$ but of length one less than $\gamma_j$.
Since this is the form of any cellular string $s$ satisfying $s'<s$ and
$\dim s = \dim s' +1$, the result follows.
\end{proof}

Let $s$ be an $L$-dimensional 
cellular string.
Successively replacing an $X$ adjacent to a bit (0 or 1) by that bit
yields a chain of strings $s=s_L>s_{L-1}>\cdots>s_1>s_0$.
It follows that every maximal chain in the poset has length $L$.

\begin{example}
\label{ex:I}
Consider a string 
$s(n)=\sigma_10X\cdots X1\sigma_2$
with a block of $n$ consecutive $X$'s
(where $\sigma_1$ and $\sigma_2$ are
fixed substrings).
Let $\str/s(n)$ denote the sub-poset of $\str$ consisting of 
all strings $s'\le s(n)$ which begin in $\sigma_10$ and end in $1\sigma_2$. Then $\str/s(n)$
is isomorphic to the poset $I_n$
of integer intervals $[i,j]$ with 
$1\le i\le j\le n+1$. (The string corresponding to $[i,j]$ is
\[ \sigma_10\cdots0X\cdots X1\cdots1\sigma_2;
\]
it has 
$i$ 0's and the first 1 is 
in the $(j+1)^{st}$ spot.)

If $s$ is 
a cellular string with $k$ blocks of successive $X$'s (of lengths $n_1,...,n_k$),
the sub-poset $\str/s$ of strings $s'<s$ in
$\str$ is isomorphic to the product of posets
$\str/s(n_1),...,\str/s(n_k)$, 
i.e., to the poset 
$$
I_{n_1}\times\cdots\times I_{n_k}.
$$
\end{example}

\subsection{The polytopes}
\label{sec:polytopes}

We now turn to identifying the polytopes of Theorem~\ref{thm:main}.
Fix a 010 critical value sequence $\cv = \left(z_{n_1},\ldots,z_{n_K}\right)$ 
as in Definition \ref{def:cv}.
To each $d$-dimensional cellular string $s$ we assign a $d$-dimensional polytope 
$T(s)$ in $\R^N$; $T(s)$ will be a product of simplices.  

Let $s = \gamma_1\gamma_2\cdots \gamma_J$
be the canonical representation of a
string $s$, as in 
Definition \ref{def:cellular}.
Let $n_j$ denote the length of the substring $\gamma_j$, so $N=\sum n_j$.

\begin{itemize}
\item 
If $\gamma_j$ is either $0\cdots0$ or $1\cdots1$, and $\gamma_j$ is the $k^{th}$ block from the left involving $0$ or $1$,
we set 
\[ 
T(\gamma_j) = \setof{z_k}^{n_k}
 = (z_k,...,z_k).
\]
\item
If $\gamma_1$ is a block $X\cdots X$, then
\[
T(\gamma_1) = \setof{ (x_1,\ldots,x_{n_j})\in\R^{n_j}: \infty\geq x_1\geq \cdots \geq x_{n_1} \geq z_1 }.
\]
\item 
If $\gamma_J$ is a block $X\cdots X$, then
\[
T(\gamma_J) = \setof{ (x_1,\ldots,x_{n_j})\in\R^{n_j}: z_k\leq x_1\leq \cdots \leq x_{n_1} \leq \infty }.
\]
\item 
If $\gamma_j$ is a block $X\cdots X$ 
(for $1 < j < J$),
and $\gamma_{j-1}$ is the $k^{th}$
block from the left involving $0$ or $1$,
then
\[
T(\gamma_j) = \setof{ (x_1,\ldots,x_{n_j})\in\R^{n_j}}
\quad\text{where}\quad
\begin{cases}
z_k \leq x_1 \leq \cdots \leq x_{n_j}\leq z_{k+1} & \text{if $k$ is odd;} \\
z_k \geq x_1 \geq \cdots \geq x_{n_j}\geq z_{k+1} &\text{if $k$ is even.} 
\end{cases}
\]

\item
We define $T(s)\subset\R^N$ to be
the concatenation:
\[
 T(s)  = T(\gamma_1\gamma_2\cdots \gamma_J) = \prod_{j=1}^J T(\gamma_j).
\]
\end{itemize}

Let $P$ be a persistence diagram and
$z\in dgm^{-1}(P)$. The component $C(z)$ 
of $data_P$ is the union of the T(s), 
where $s\in\str$ and $T(s)$ is defined using the critical value sequence $\cv(z)$.
This is clear from Definition \ref{def:cv(z)}.

Since the critical value sequence 
is always assumed to be fixed, 
we will suppress it in the notation.

\begin{example}
Consider the case $K=3$ and $N=5$. If $s=01XX0$, then $(\gamma_1,\ldots,\gamma_4) = (0,1,XX,0)$.
So, $(n_1,n_2,n_3,n_4)=(1,1,2,1)$ and hence
\[
T(01XX0) = \setof{z_1}\times \setof{z_2}\times \setof{(x_1,x_2) : z_2 \geq x_1 \geq x_2 \geq z_3}\times \setof{z_3}
\cong \Delta^0\times\Delta^0\times
\Delta^2\times\Delta^0.
\]
If $s=X01X0$, then $(\gamma_1,\ldots,\gamma_4,\gamma_5) = (x,0,1,x,0)$.
So, $(n_1,n_2,n_3,n_4,n_5)=(1,1,1,1,1)$ and hence
\[
T(X01X0) = [z_1,\infty)\times \setof{z_1}\times \setof{z_2}\times [z_3,z_2]\times \setof{z_3}
\cong [0,\infty)\times\Delta^0\times
\Delta^0\times\Delta^1\times\Delta^0
\]
Similarly, 
$
T(X0100) = [z_1,\infty)\times \setof{z_1}\times \setof{z_2}\times \setof{z_3}\times \setof{z_3}
\cong [0,\infty)\times\Delta^0\times
\Delta^0\times\Delta^0\times\Delta^0$.
\\
Observe that $X0100 <X01X0$ and $T(X0100)\subset T(X01X0)$.
\end{example}

Let $\mathbf{Poly}$ denote the poset of
polytopes in $\R^N$ under inclusion.
By definition, $T$ maps strings in
$\str$ to polytopes in $\mathbf{Poly}$.

\begin{lemma}
\label{lem:Poly}
$T:\str\to \mathbf{Poly}$ 
is an injective poset morphism,
and preserves greatest lower bounds.
\end{lemma}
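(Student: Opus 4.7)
My plan is to establish the three assertions of the lemma --- injectivity, the poset-morphism property, and preservation of greatest lower bounds --- in turn.

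For injectivity, I will read the string $s$ off directly from the polytope $T(s)$. A coordinate of $\R^N$ takes a constant value across all of $T(s)$ iff it lies in a bit-block of $s$, and that constant value is some critical value $z_k$; the parity of $k$ determines whether the symbol is $0$ (odd $k$) or $1$ (even $k$). The remaining coordinates lie in $X$-blocks. Since the critical values $z_1,\dots,z_K$ of the fixed component are pairwise distinct, this data recovers $s$ uniquely.

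For the poset-morphism property I will use the description from the proof of Lemma~\ref{lem:glb}: every covering relation in $\str$ is of the form ``replace a single $X$ adjacent to a bit by that bit''. Under such an elementary substitution, exactly one $X$-coordinate of $T(s)$ is fixed at an endpoint of its allowed chain, yielding a polytope strictly contained in $T(s)$. Transitivity along any chain of covering relations then gives $s'\le s\Rightarrow T(s')\subseteq T(s)$ in general.

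For meet preservation, the inclusion $T(s_1\wedge s_2)\subseteq T(s_1)\cap T(s_2)$ is immediate from the poset-morphism property. The reverse inclusion is the heart of the lemma. I will take $z\in T(s_1)\cap T(s_2)$ and argue coordinate by coordinate. At a position where both strings hold bits, their values must coincide at the same $z_k$ (otherwise $s_1\wedge s_2$ does not exist), matching the bit carried by $s_1\wedge s_2$. At a position where one string holds a bit and the other an $X$, the bit-value $z_k$ must lie in the range of the $X$-block chain, and by distinctness of the $z_k$'s this forces it to sit at one endpoint of the chain; the corresponding bit propagates into $s_1\wedge s_2$. At a position where both strings hold $X$'s, the two chain constraints combine into the chain constraint of an $X$-block of $s_1\wedge s_2$. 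Since $T$ factors as a product over blocks, the per-coordinate matches glue into the full equality $T(s_1\wedge s_2)=T(s_1)\cap T(s_2)$.

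The main obstacle is this reverse inclusion, because $s_1$ and $s_2$ need not admit any common $L$-dimensional upper bound in $\str$ even when they share a common lower bound; hence one cannot uniformly reduce to the convenient product decomposition $\str/s^*\cong\prod_i I_{n_i}$ of Example~\ref{ex:I}, where intersection of polytopes matches meet in each interval poset $I_n$ by a one-line calculation. The coordinate-by-coordinate analysis sketched above, leveraging the distinctness of the $z_k$'s together with the block-alternation conditions (i)--(iv) of Definition~\ref{def:cellular}, is what handles the general case.
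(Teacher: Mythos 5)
Your injectivity and monotonicity arguments are fine, but the crucial step --- the inclusion $T(s_1)\cap T(s_2)\subseteq T(s_1\wedge s_2)$ --- has a genuine gap: your case analysis tacitly assumes that the meet is computed symbol-by-symbol (a bit wherever either string has a bit, an $X$ wherever both do). That is false in $\str$. For instance, in $\str(5,2)$ take $s_1=0XX10$ and $s_2=XX010$: both are cellular strings with a common lower bound, but the pointwise candidate $0X010$ violates conditions (iii) and (iv) of Definition~\ref{def:cellular} (an $X$ flanked by two $0$'s, and three $0$-blocks), so the greatest lower bound is $00010$ --- at position $2$ both strings carry $X$ yet the meet carries a bit. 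Your third case (``the two chain constraints combine into the chain constraint of an $X$-block of $s_1\wedge s_2$'') therefore does not cover this configuration, and verifying the bit constraint $x_2=z_1$ of $T(00010)$ for a point of $T(s_1)\cap T(s_2)$ is not a per-coordinate matter: one must feed the bit constraint $x_1=z_1$ coming from $s_1$ into the chain $x_1\ge x_2\ge z_1$ coming from $s_2$. In general, whenever the bit intervals of $s_1$ and $s_2$ inside a common bit block of a lower bound are disjoint, the meet fills the gap with bits, and the forcing argument must cross blocks of $s_1$ and $s_2$, whose block decompositions do not align with that of $s_1\wedge s_2$; so the ``product over blocks'' gluing you invoke is not available as stated. (A smaller quibble: in your mixed bit/$X$ case, distinctness of the $z_k$ alone does not force the bit value to be an endpoint of the $X$-chain's range, since a third critical value can lie numerically inside that interval; what is really needed is the combinatorial fact that, given a common lower bound, the $k$-th bit block of each string sits inside its $k$-th bit block, so the indices match.)

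The paper sidesteps all of this by working only with covering relations: for a string $s'$ having a bit block of length at least two, the two codimension-one strings $s_1,s_2$ above it (replace the first or the last bit of that block by $X$) satisfy $T(s_1)\ne T(s_2)$ and $T(s')=T(s_1)\cap T(s_2)$, and the general statement then follows by the same downward induction that establishes Lemma~\ref{lem:glb}. If you prefer your direct route, you must first describe $s_1\wedge s_2$ correctly --- within each bit block of a common lower bound its bits occupy the convex hull of the corresponding bit intervals of $s_1$ and $s_2$ --- and then handle the gap positions by a squeeze combining one string's bit constraint with the other string's chain, as in the example above.
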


\begin{proof}
Suppose that $s'<s$ and 
$1+\dim s' = \dim s$. 
If $s' = \gamma_1\cdots \gamma_J$ is the canonical form, then some $\gamma_j$ has the form 
$a\cdots a$ (where $a$ is 0 or 1), and $s$ has 
the form 
\[
s_1=\gamma_1\cdots \bar{\gamma}_jX \cdots \gamma_J 
\quad\textrm{or}\quad
s_2=\gamma_1\cdots X\bar{\gamma}_j \gamma_J,
\]
where $\bar{\gamma}_j=a\cdots a$ 
has one fewer bit that $\gamma_j$.
It is clear from the definition of $T$ that
$T(s_1)\ne T(s_2)$, and 
$T(s')$ is the intersection of
$T(s_1)$ and $T(s_2)$,
as desired.
\end{proof}

\subsection{Geometric Realization of Posets}

Let $C$ be a poset 
(partially ordered set).
For any $c\in C$, we write
$C/c$ for the sub-poset 
$\setof{c':c'\le c}$; 
$C$ is the union of the $C/c$.
If $c_1$ and $c_2$ have a
greatest lower bound 
$c_{12}$, then 
$(C/c_1)\cap(C/c_2)=C/c_{12}$.

By definition, the geometric realization $BC$ of any poset $C$ is a simplicial complex
whose $k$-dimensional simplices are indexed by the chains
$c_0<c_1<\cdots c_k$ 
of length $k$ in $C$. 
It is the union of
the realizations $B(C/c)$ of the sub-posets $C/c$; if
 $c_1$ and $c_2$ have a
greatest lower bound $c_{12}$,
then $B(C/c_1)$ and $B(C/c_2)$
intersect in $B(C/c_{12})$.
See \cite[IV.3.1]{K-book}
for more details.

Here are some basic facts;
see \cite[IV.3]{K-book} for
a discussion.
A poset morphism $f:C\to C'$
determines a continuous map
$BC\to BC'$, and a natural transformation
$\eta:f\Rightarrow f'$
between morphisms gives a homotopy $B\eta:BC\to BC'$
between $f$ and $f'$.
In addition, realization commutes with products: $B(C_1\times C_2)\cong (BC_1)\times(BC_2).$
Applying these considerations
to the poset $\str$, we see that
its realization $B\str$ 
is the union of the polytopes
$B(\str/s)$, and 
if $s_{12}$ is the 
greatest lower bound of $s_1$ and $s_2$
then $B(\str/s_1)\cap B(\str/s_2)$ 
is $B(\str/s_{12})$.

Let $s$ be a cellular string.
We saw in Example \ref{ex:I}
that the poset $\str/s$ is isomorphic to the product 
$I_{n_1}\times\cdots\times I_{n_k}$ 
of the posets $I_{n_j}$ of integer intervals in $[1,n_j+1]$,
corresponding to the blocks of $n_j$ succesive $X$'s in $s$. 
It is well known that $B(I_n)$ is homeomorphic to the $n$-simplex $\Delta^n$. Thus
\[
B(\str/s) \cong
\prod B(I_{n_j}) \cong
\Delta^{n_1} \times\cdots\times \Delta^{n_k}.
\]
By construction, 
$T(s)=\prod T(\gamma_j)$ also has this form.
Hence we have a natural homeomorphism
\[
B(\str/s) \cong \prod B(\str/s(n_j)) \cong
\prod B(I_{n_j}) \cong 
\prod T(\gamma_j) = T(s).
\]

\begin{theorem}
$B\str$ is homeomorphic to $C(z)$.
\end{theorem}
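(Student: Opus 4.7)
The plan is to exhibit both $B\str$ and $C(z)$ as colimits of the same diagram of subcomplexes indexed by the poset $\str$, and then check that the individual pieces are homeomorphic in a way compatible with the gluing data. This is essentially a ``patchwork'' argument that piggybacks on the homeomorphism $B(\str/s) \cong T(s)$ already established for each single cellular string $s$.

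First I would set up the two decompositions. On the combinatorial side, the preceding paragraph gives $B\str = \bigcup_{s \in \str} B(\str/s)$, with the crucial intersection formula $B(\str/s_1) \cap B(\str/s_2) = B(\str/s_{12})$ whenever $s_1$ and $s_2$ admit a greatest lower bound $s_{12}$, and empty intersection otherwise. On the geometric side, the paragraph following the definition of $T(s)$ identifies $C(z) = \bigcup_{s \in \str} T(s)$; since by Lemma \ref{lem:Poly} the map $T$ is injective and preserves greatest lower bounds, we likewise have $T(s_1) \cap T(s_2) = T(s_{12})$. In both cases it suffices to index the union by maximal ($L$-dimensional) strings, with pairwise intersections handled by glbs.

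Next I would promote the pointwise homeomorphism $\varphi_s\colon B(\str/s) \to T(s)$ constructed just before the theorem to a natural transformation in $s$. Using Example \ref{ex:I}, when the blocks of successive $X$'s in $s$ have lengths $n_1,\dots,n_k$, both sides factor as products: $B(\str/s) \cong \prod_j B(I_{n_j}) \cong \prod_j \Delta^{n_j}$ and $T(s) = \prod_j T(\gamma_j)$, where each $T(\gamma_j)$ for an $X$-block is, by its defining inequalities, an affine image of a standard simplex. I would fix once and for all the canonical affine homeomorphism $\Delta^{n_j} \to T(\gamma_j)$ that sends the vertex indexed by an integer interval $[i,j] \in I_{n_j}$ to the extreme point of $T(\gamma_j)$ whose first $i{-}1$ entries equal one endpoint and the remaining entries equal the other endpoint. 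For $s' < s$, the inclusion $\str/s' \hookrightarrow \str/s$ factors through the block decomposition, so verifying $\varphi_s|_{B(\str/s')} = \varphi_{s'}$ reduces to the block-level statement that face inclusions $I_{n'} \hookrightarrow I_n$ correspond under the canonical parametrization to the inclusion $T(\gamma') \hookrightarrow T(\gamma)$; this is a direct calculation with the explicit coordinates above.

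Once naturality is in hand, I would invoke the standard gluing lemma for CW-complexes (or equivalently the universal property of the colimit in topological spaces): the compatible family $\{\varphi_s\}_{s \in \str}$ assembles into a unique continuous map $\varphi\colon B\str \to C(z)$. Because the intersection patterns on both sides are governed by the same glb structure (and $\varphi$ restricts to a homeomorphism on each cell $B(\str/s)$), $\varphi$ is a bijection and, being a bijection between finite CW-complexes (on each component) that is cellular and a homeomorphism on each closed cell, it is a homeomorphism.

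The main obstacle I anticipate is the bookkeeping in the naturality step: one must be careful that the chosen affine identifications $\Delta^{n_j} \cong T(\gamma_j)$ respect the correspondence between ``replacing an $X$ adjacent to a bit by that bit'' on the poset side and ``collapsing an inequality to an equality'' on the polytope side, across all three cases ($\gamma_1$ leftmost, $\gamma_J$ rightmost, or interior $X$-block, with the parity of the adjacent bit-block affecting the direction of the inequalities). Working systematically in barycentric coordinates on $B(I_{n_j})$ makes this transparent, since vertices of $B(I_{n_j})$ are precisely the integer intervals and the affine map onto $T(\gamma_j)$ can be written in a single formula that manifestly restricts correctly to all faces.
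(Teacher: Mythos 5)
Your proposal is correct and takes essentially the same route as the paper: both decompose $B\str$ and $C(z)$ into the pieces $B(\str/s)$ and $T(s)$, use the product homeomorphisms $B(\str/s)\cong T(s)$, and identify intersections on the two sides via greatest lower bounds (Lemmas~\ref{lem:glb} and~\ref{lem:Poly}). The only difference is level of detail: you spell out the naturality of the piecewise identifications and the final gluing step, which the paper's short proof leaves implicit.
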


\begin{proof}
By construction, $C(z) = \bigcup T(s)$,
and $B\str = \bigcup B(\str/s)$.
It suffices to observe that for each 
$s_1,...,s_n$ the restriction of the
$B\str/s_i \cong T(s_i)$ induces a
homeomorphism between the
intersection of the $B(\str/s_i)$ 
and the intersection $T(s_i)$.
This holds because the two sides are
identified with $B(\str/s')$ and $T(s')$,
where $s'$ is the greatest lower bound 
of the $s_i$.
\end{proof}

\section{Contractibility}
\label{sec:contract}

We now define a poset morphism
$F_1:\str\to \str$, and modify it to define
poset morphisms $F_\ell\colon \str^{(\ell)}\to \str^{(\ell)}$ for $\ell>1$.

\begin{definition}
\label{defn:F1Lower}
Let $s$ be an $L$-dimensional cellular string.
We define $F_1(s)$ to be the string obtained from $s$ by transposing the first
(i.e., leftmost) $X$ with the bit immediately
preceding it. 
If $X$ is the initial symbol, we set $F_1(s)=s$.

If $s$ is a lower-dimensional 
cellular string, we define $F_1(s)$ as follows. 
If $s$ has an initial $X$ with no $00$ or $11$ preceding it, we do as before: transpose $X$ with the bit immediately preceding it, or do nothing if $X$ is the initial symbol.  
If $s$ begins with a block of $n+1$ zeroes, say $s=00\cdots 0\sigma_2$, we replace the initial $0$ by $X$, so  $F_1(s)=X0\cdots 0 \sigma_2$.
Otherwise, the string must have the form 
$s'=\sigma_1abb\sigma_2$,
where $a,b$ are bits, $a\ne b$, 
$\sigma_1$ is an (alternating)
bitstring not ending in $a$, and
$\sigma_2$ is the remainder of the string.
We set
\[
F_1(s')=\sigma_1aab\sigma_2. 
\]

The definition of $F_\ell\colon \str^{(\ell)}\to \str^{(\ell)}$ 
mimics that of $F_1$.
Specifically, if $s=\beta\sigma$,
where $\beta=X\cdots X$ is a
block of length $\ell-1$ then
$F_{\ell}(s) = \beta F_1(\sigma)$.
\end{definition}

\begin{example}
In Figure~\ref{fig:K=2N=3}, 
the map $F_1$ sends strings surrounded by rectangles (resp., ellipses) from one column to strings surrounded by rectangles (resp., ellipses)
in the second column to the right, while leaving the last column fixed.  Thus
$F_1(01100)= 00100$ and $F_1(00100)=X0100$.

Since $\str^{(2)}$ is the rightmost column, the map $F_2$ acts on this column, mapping
strings surrounded by rectangles (resp., ellipses) to
those two rows down. Thus
$F_2(XX010)=XX010$, $F_2(X0010)=XX010$, and
$F_2(XX010)=XX010$.
\end{example}

\begin{lemma}
\label{lemma:posetmap}
$F_1:\str\to \str$ is a poset morphism,
and is the identity on
the sub-poset $\str^{(2)}$.

Furthermore, $F_1^K(\str) = \str^{(2)}$. 
\end{lemma}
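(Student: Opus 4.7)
The plan is to prove the three assertions in order of increasing difficulty. That $F_1$ fixes $\str^{(2)}$ pointwise is immediate: if $s\in\str^{(2)}$ then its first symbol is $X$, which is in particular the leftmost $X$, so the first clause of Definition~\ref{defn:F1Lower} sets $F_1(s)=s$.

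For the iteration statement $F_1^K(\str)=\str^{(2)}$, I introduce the potential $\lambda(s)\in\{0,1,2,\ldots\}$ defined as the length of the longest prefix of $s$ of the form $0,1,0,1,\ldots$, so that $\lambda(s)=0$ precisely when $s\in\str^{(2)}$. The argument rests on two facts: the uniform bound $\lambda(s)\le K$ for every $s\in\str$, and the recursion $\lambda(F_1(s))=\lambda(s)-1$ whenever $\lambda(s)\ge 1$. The bound is a block count: an alternating prefix of odd length $p$ accounts for $(p+1)/2$ of the $M$ zero-blocks of $s$, and of even length $p$ for $p/2$ of the $M-1$ one-blocks, so in either case $p\le 2M-1=K$. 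The recursion is a case-by-case verification against Definition~\ref{defn:F1Lower}: the transposition clause places the new leftmost $X$ at position $\lambda(s)$, truncating the alternating prefix by one; the ``initial $00$'' clause applies only when $\lambda(s)=1$ and sends $s$ directly into $\str^{(2)}$; and the $\sigma_1 abb\sigma_2\mapsto\sigma_1 aab\sigma_2$ clause flips the bit at position $\lambda(s)$ so that it matches its predecessor, breaking alternation one position earlier. Combined with $F_1|_{\str^{(2)}}=\mathrm{id}$, these give $F_1^K(\str)=\str^{(2)}$.

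The main obstacle is proving that $F_1$ is a poset morphism. It suffices to check $F_1(s)\le F_1(s')$ for each cover $s'>s$, i.e., when $s'$ is obtained from $s$ by replacing a bit at a single position $q$ by $X$. Writing $p:=\lambda(s)$, I distinguish three regimes. If $q>p+1$, then $F_1$ acts identically on $s$ and $s'$ in the first $p+1$ positions and leaves $q$ unchanged on both, so the inequality is automatic. If $q=p+1$, then $s_{p+1}$ is necessarily a bit forming a double with $s_p$, and a direct comparison of $F_1(s)$ (produced by clause 2 or 3) with $F_1(s')$ (produced by the transposition clause) gives the required inequality. If $q\le p$ lies inside the alternating prefix, the bit at $q$ is a singleton block, and turning a singleton into $X$ generally destroys a zero- or one-block, violating the cellular-string axioms; the only admissible sub-case is $q=p$ with $s_{p+1}=s_p$ a double bit, in which the longer adjacent bit-block absorbs the loss, and another short comparison closes the argument. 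The real difficulty here is bookkeeping---identifying which covers are admissible pairs of cellular strings and, for each, tracking which clause of $F_1$ fires on $s$ versus $s'$---rather than any single computation.
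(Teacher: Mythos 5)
Your proposal is correct, and for the poset-morphism assertion it is essentially the paper's argument in different clothing: both reduce to elementary relations (you pass directly to covers $s<s'$ and use transitivity; the paper runs a downward induction on dimension that funnels the general relation through the two covers $s_1,s_2$ of the smaller string) and then case-check against the three clauses of Definition~\ref{defn:F1Lower}. Your bookkeeping by the position $q$ of the replaced bit relative to the alternating-prefix length $p=\lambda(s)$ is equivalent to the paper's bookkeeping by which clause of $F_1$ fires on the smaller string, and your observation that covers with $q<p$ (or $q=p$ with an $X$ at $p+1$) are excluded by the cellular-string axioms is exactly the implicit step the paper also relies on when it asserts that any $s>s'$ dominates $s_1$ or $s_2$. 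Where you genuinely add something is the claim $F_1^K(\str)=\str^{(2)}$: the paper's proof addresses only the morphism property and leaves the iteration statement (and the identity on $\str^{(2)}$) unargued, whereas your potential $\lambda$ with the exact decrement $\lambda(F_1(s))=\lambda(s)-1$ and the block-counting bound $\lambda(s)\le K=2M-1$ gives a clean, complete justification of that part; the two sub-bounds (odd prefix length against the $M$ zero-blocks, even against the $M-1$ one-blocks) are correct. The only loose phrasing is in your regime $q\le p$, where ``the bit at $q$ is a singleton block'' holds for $q<p$ but not in the admissible sub-case $q=p$ with $s_{p+1}=s_p$; since you immediately single out that sub-case and handle it by the comparison $F_1(s)=\sigma_1aab\sigma_2\le\sigma_1Xab\sigma_2=F_1(s')$, this is a matter of wording, not a gap.
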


\begin{proof}
We proceed by downward induction on
$d=\dim(s)$ to show that 
if $s'<s$ then
$F_1(s')\le F_1(s)$.
If $s'$ contains an $x$ with no 
$00$ or $11$ preceeding it, the same
is true for $s$ and the inequality is
evident. 

Next, suppose that
$s'=\sigma_1abb\cdots b\sigma_2$;
either $s_1=\sigma_1aXb\cdots b\sigma_2\le s$
or else $s_2=\sigma_1ab\cdots bx\sigma_2\le s$. By induction, $F_1(s_1)\le F_1(s)$
or $F_1(s_2)\le F_1(s)$, so it suffices
to observe that $F_1(s')\le F_1(s_1),
F_1(s_2).$

Finally, if $s'=00\cdots0\sigma$ then
either $s_1=X0\cdots 0\sigma\le s$
or else $s_2=00\cdots0X\sigma\le s$.
By induction, $F_1(s_1)\le F_1(s)$
or $F_1(s_2)\le F_1(s)$, so it suffices
to observe that $F_1(s')\le F_1(s_1),
F_1(s_2).$
\end{proof}

\begin{remark}
The proof of Lemma \ref{lemma:posetmap} also shows that each $F_\ell$ is a poset morphism.
\end{remark}

We can filter the poset $\str$ 
by sub-posets $Fil_i$, where $Fil_0=\str^{(2)}$, $Fil_K=\str$
and $Fil_i$ is the full poset 
on the set of
strings $s$ with $F_1^i(s)\subset \str^{(2)}$.
In Figure \ref{fig:K=2N=3}, for example,
$Fil_1$ (resp., $Fil_2$) is the rightmost 
3 columns (resp., 5 columns).
Since $F_1$ maps $Fil_i$ to $Fil_{i-1}$, the geometric realization of $BF_1$ restricts to a continuous map from
$BFil_i$ to $BFil_{i-1}$. 
We will prove:

\begin{proposition}
The inclusions $BFil_{i-1}\subseteq BFil_i$
are homotopy equivalences.
Hence $B\str^{(2)}\subseteq B\str$
is a homotopy equivalence.
\end{proposition}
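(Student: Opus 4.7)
The plan is to exhibit the restriction $BF_1|_{BFil_i} \colon BFil_i \to BFil_{i-1}$ as a homotopy inverse to the inclusion $j\colon BFil_{i-1} \hookrightarrow BFil_i$, proceeding by induction on $i$. The base case $i = 0$ is trivial, since $Fil_0 = \str^{(2)}$ and $F_1$ acts as the identity on $\str^{(2)}$ by Lemma \ref{lemma:posetmap}. In the inductive step, the composition $BF_1 \circ j$ equals the restriction $BF_1|_{BFil_{i-1}}$, which maps into $BFil_{i-2}$ and is homotopic to $id_{BFil_{i-1}}$ by the inductive hypothesis. So the main work lies in showing that the other composition, $j \circ BF_1\colon BFil_i \to BFil_i$, is homotopic to $id_{BFil_i}$.

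For this, I would construct a zigzag of poset morphisms
\[
id_{\str} \;\Leftrightarrow\; H \;\Leftrightarrow\; G \;\Leftrightarrow\; F_1
\]
in which each $\Leftrightarrow$ denotes a pointwise inequality between consecutive morphisms (i.e., a natural transformation in the categorical sense). Realizations of poset morphisms carry pointwise inequalities to homotopies, so the zigzag induces a chain of homotopies between $id$ and $BF_1$ within $B\str$, and then within each $BFil_i$ once we verify that $G$ and $H$ preserve the filtration.

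The morphisms $G, H\colon \str \to \str$ are defined by a case analysis following Definition \ref{defn:F1Lower}. In Case C, where $s = \sigma_1 abb\sigma_2$, I set $G(s) = \sigma_1 aXb\sigma_2$: this is a common upper bound of $s$ and $F_1(s) = \sigma_1 aab\sigma_2$, and it lies in $\str$ because $a \neq b$ ensures condition (iii) of Definition \ref{def:cellular} while the $0$-block count is preserved. In Case B, where $s = 00\cdots 0\sigma$, one has the direct inequality $s \leq F_1(s) = X0\cdots 0\sigma$. In Case A, where $s = \tau bXt$ with an alternating prefix $\tau b$, the natural candidate upper bound $\tau XXt$ fails condition (iii), since the two blocks neighboring the enlarged $X$-block both carry the symbol $\bar{b}$; to bypass this, I use a common lower bound $H(s) = \tau bbt$, noting $H(s) \leq s$ and $H(s) \leq F_1(s) = \tau Xbt$ and that $\tau bbt$ is a valid cellular string. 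These definitions are extended coherently to the remainder of $\str$ (for instance, as the identity on $\str^{(2)}$) so as to produce honest poset morphisms.

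The main obstacle is twofold. First, Case A forces the zigzag to alternate directions (using a lower bound instead of an upper bound), so defining $G$ and $H$ globally requires a careful block-level analysis of how the three cases interact under the poset order (which corresponds to replacing bits with $X$'s), especially when such a replacement moves a string from Case C into Case A. Second, one must verify that $G(s), H(s) \in Fil_i$ whenever $s \in Fil_i$, so that the induced homotopies restrict to each filtration level; this reduces to checking that neither $G$ nor $H$ increases the number of $F_1$-iterations required to reach $\str^{(2)}$. Granted these verifications, realizing the zigzag produces a homotopy $id_{BFil_i} \simeq j \circ BF_1$, completing the inductive step. Iterating over $i = 1,\ldots, K$ then yields the chain of homotopy equivalences $B\str^{(2)} = BFil_0 \simeq BFil_1 \simeq \cdots \simeq BFil_K = B\str$, proving the proposition.
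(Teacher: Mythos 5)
Your overall strategy---connect the identity to a retraction onto $Fil_{i-1}$ by a zigzag of pointwise inequalities of poset morphisms and then realize to get homotopies---is the same as the paper's, but the proposal stops short of the one construction that carries the whole argument. A relation ``$\Leftrightarrow$'' between poset morphisms yields a homotopy only if it is a natural transformation, i.e.\ a pointwise inequality with the \emph{same} direction for every $s$; your $G$ and $H$ are specified only case by case (an upper bound $\sigma_1aXb\sigma_2$ in Case C, a lower bound $\tau bbt$ in Case A, nothing beyond ``extend coherently'' elsewhere), and you explicitly defer both the verification that these choices assemble into globally defined order-preserving maps with uniform arrow directions and the check that they preserve each $Fil_i$. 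Since that is precisely the hard content of the proposition, what you have is an outline with the key step missing, not a proof. (Your induction on $i$ is also avoidable: the paper replaces $F_1$ by the morphism $F_{1,i}$ which is the identity on $Fil_{i-1}$ and $F_1$ otherwise, so the composite with the inclusion is literally the identity and only one homotopy, $\mathrm{id}_{BFil_i}\simeq BF_{1,i}$, is needed.)

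For comparison, the paper's proof uses a single intermediate $h\colon Fil_i\to Fil_i$ with $h(s)=s$ on $Fil_{i-1}$ and $h(s)$ the greatest lower bound of $s$ and $F_1(s)$ otherwise, giving the uniform zigzag $s\ge h(s)\le F_{1,i}(s)$ and hence homotopies $\mathrm{id}\simeq Bh\simeq BF_{1,i}$, with existence of greatest lower bounds coming from Lemma \ref{lem:glb}. Your hesitation about Case C does point at a genuine subtlety here: when $s=\sigma_1abb\sigma_2$, the strings $s$ and $F_1(s)=\sigma_1aab\sigma_2$ differ at a position where both carry bits, so they have \emph{no} common lower bound (e.g.\ $s=01100$ and $F_1(s)=00100$ in Figure \ref{fig:K=2N=3}, both minimal in the poset), and only the common upper bound $\sigma_1aXb\sigma_2$ is available; so any zigzag must indeed change direction depending on the case, exactly as you suspected, and the paper's recipe for $h$ needs a corresponding adjustment there. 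But identifying the difficulty is not the same as overcoming it: to complete your argument you must actually exhibit finitely many poset morphisms and natural transformations valid on all of $Fil_i$ (combining, say, $s\le\sigma_1aXb\sigma_2\ge F_1(s)$ in Case C with $s\ge\tau bbt\le F_1(s)$ in Case A and the direct inequality in Case B), verify conditions (i)--(iv) of Definition \ref{def:cellular} and monotonicity for every value, and confirm compatibility with the filtration. As written, the proposal does not establish the statement.
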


\begin{proof}
For $i>0$, we define poset morphisms 
$F_{1,i}:Fil_i\to Fil_{i-1}\subseteq Fil_i$ 
to be the identity on $Fil_{i-1}$
and $F_1$ otherwise. The geometric
realization of $F_{1,i}$ is a continuous
map $BFil_i\to BFil_{i-1}\subseteq BFil_i$
which is the identity on $BFil_{i-1}$.

We will prove that, on geometric realization, $BF_{1,i}$
is homotopic to the identity on $BFil_i$.

We define a poset morphism
$h:Fil_i\to Fil_i$ as follows.
If $s\in Fil_{i-1}$ then $h(s)=s$;
if $s\not\in Fil_{i-1}$, define $h(s)$ 
to be the greatest lower bound of
$s$ and $F_1(s)$. Thus $Bh$ is a continuous map from $BFil_i$ to itself.
For $s\in Fil_i$, the inequalities 
$s \ge h(s) \le F_{1,i}(s)$
yield natural transformations
$\textrm{id}_i {\Leftarrow} h \Rightarrow F_1$.
and hence homotopies between the maps $\textrm{id}_i$
(the identity map on $BFil_i$), $Bh$ and $BF_{1,i}$.
\end{proof}

\begin{corollary}
Each $B\str^{(\ell+1)}\subset B\str^{(\ell)}$ is a homotopy equivalence.  In particular,
the inclusion of the point
$B\str^{(L+1)}$ in $B\str$ is a
homotopy equivalence, i.e.,
$B\str$ is contractible.
\end{corollary}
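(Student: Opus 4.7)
The plan is to iterate the Proposition. Concretely, for each $\ell$ with $1\le \ell \le L$, I would prove the level-$\ell$ analogue of the Proposition, asserting that the inclusion $B\str^{(\ell+1)} \subseteq B\str^{(\ell)}$ is a homotopy equivalence. Composing these $L$ homotopy equivalences gives $B\str^{(L+1)}\hookrightarrow B\str$ is a homotopy equivalence; since $\str^{(L+1)}$ is the one-element poset $\{X\cdots X010\cdots10\}$, its realization is a single point, so $B\str$ is contractible.

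To establish the level-$\ell$ statement I would simply mimic the proof of the Proposition, everywhere substituting $\str^{(\ell)}$ for $\str$, $\str^{(\ell+1)}$ for $\str^{(2)}$, and $F_\ell$ for $F_1$. Specifically, I would introduce the filtration $Fil^{(\ell)}_0 = \str^{(\ell+1)}$, $Fil^{(\ell)}_K = \str^{(\ell)}$, with
\[
Fil^{(\ell)}_i = \{\, s\in \str^{(\ell)} : F_\ell^i(s)\in \str^{(\ell+1)}\,\},
\]
and then define $F_{\ell,i}:Fil^{(\ell)}_i \to Fil^{(\ell)}_{i-1}\subseteq Fil^{(\ell)}_i$ to be the identity on $Fil^{(\ell)}_{i-1}$ and $F_\ell$ elsewhere. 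I would then introduce the auxiliary poset morphism $h^{(\ell)}:Fil^{(\ell)}_i \to Fil^{(\ell)}_i$ sending $s$ to the greatest lower bound of $s$ and $F_{\ell,i}(s)$, which exists by Lemma~\ref{lem:glb}. The inequalities $s \ge h^{(\ell)}(s)\le F_{\ell,i}(s)$ supply natural transformations $\mathrm{id} \Leftarrow h^{(\ell)} \Rightarrow F_{\ell,i}$, and these realize to homotopies in $BFil^{(\ell)}_i$ between the identity, $Bh^{(\ell)}$, and $BF_{\ell,i}$. Because $BF_{\ell,i}$ lands in $BFil^{(\ell)}_{i-1}$ and is the identity there, the inclusion $BFil^{(\ell)}_{i-1}\subseteq BFil^{(\ell)}_i$ is a homotopy equivalence. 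Stringing these inclusions together proves $B\str^{(\ell+1)}\subseteq B\str^{(\ell)}$ is a homotopy equivalence.

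What needs verification at each level, and is the only real content beyond the Proposition, is that $F_\ell$ is a poset morphism (noted in the Remark after Lemma~\ref{lemma:posetmap}) that fixes $\str^{(\ell+1)}$, that $F_\ell^K$ maps $\str^{(\ell)}$ into $\str^{(\ell+1)}$, and that the greatest lower bounds constructed above stay inside $Fil^{(\ell)}_i$. The first two items follow from the definition $F_\ell(\beta\sigma) = \beta F_1(\sigma)$, using the corresponding statements for $F_1$ from Lemma~\ref{lemma:posetmap} applied to the suffix $\sigma$. The last item is the one place that needs care: I would observe that since $h^{(\ell)}(s) \le s$ in the poset $\str^{(\ell)}$, the initial $\ell-1$ symbols of $h^{(\ell)}(s)$ must remain $X$'s (replacing an $X$ by a bit would strictly increase, not decrease, in the partial order), so $h^{(\ell)}(s)\in \str^{(\ell)}$, and the fact that $F_{\ell,i}$ preserves the filtration together with $s\ge h^{(\ell)}(s)$ forces $h^{(\ell)}(s)\in Fil^{(\ell)}_i$. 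The main obstacle is thus just this bookkeeping: checking that $F_\ell$ and the greatest lower bound operation both respect the shared initial $X$-block, so that every construction in the Proposition's proof stays inside $\str^{(\ell)}$. Once this is in hand, the corollary follows by concatenating the $L$ homotopy equivalences.
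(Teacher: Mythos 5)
Your proposal is correct and is essentially the paper's intended argument: the corollary is meant to follow by repeating the Proposition's proof verbatim with $F_\ell$ in place of $F_1$ (the Remark after Lemma~\ref{lemma:posetmap} supplies that $F_\ell$ is a poset morphism), and then composing the resulting homotopy equivalences down to the one-point poset $\str^{(L+1)}$. Your extra bookkeeping about the initial $X$-block being preserved by $F_\ell$ and by greatest lower bounds is exactly the (unstated) verification the paper leaves to the reader.
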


\begin{remark}
We can describe the map
$T(s)\to T(F_1(s))$ induced by
$F_1$. For example, suppose that
$s=\sigma_1\gamma_{j-1}\gamma_j\sigma_2$,
where $\sigma_1=\gamma_1\cdots\gamma_{j-1}$ is an alternating bitstring of length $\ge2$ and
$\gamma_j$ is a block 
$X\cdots X$. Then $T(\gamma_{j-1}) = \{z_{j-1}\}$ and
$T(\gamma_j)\subset\R^{n_j}$
is defined by inequalities,
either $z_{j-1}\le x_1\cdots$ or
$z_{j-1}\ge x_1\cdots$, depending on the parity of $j$.
The map $F_{1}$ sends 
$T(\gamma_{j-1})\times T(\gamma_j)$ 
to the subset 
$$
T(X)\times \{z_{j-1}\}\times T(\gamma'),
$$
where $T(X)$ is defined by 
$z_{j-2}\le x_1\le z_{j-1}$
and $T(\gamma')$ is defined by the equations $z_{j-1}\le x_2\cdots$ or
$z_{j-2}\ge x_1\cdots$.
In effect, the map sends $x_1$
to $z_{j-1}.$
\end{remark}

\section{Existence of fixed points for flows}
\label{sec:dynamics}

As an application of Theorem~\ref{thm:main}, we establish the existence of a fixed point solution of a ordinary differential equation
whose trajectories are being observed in the space of persistence diagrams.
To be more precise consider a differential equation $\dot{z} = f(z)$, $z\in \R^N$, with the property that it possesses a compact global attractor $\cA$ \cite{raugel}.
Given an initial condition $z(0)=\bar{z}\in \R^N$, we write $z(t)=\varphi(t,\bar{z})$, $t\in [0,\infty)$ for the solution in forward time. 
The important consequence of the existence of a compact global attractor is that there exists $R>0$ such that for any initial condition $\bar{z}$ there exists $t_{\bar{z}}>0$ such that $\| \varphi(t,\bar{z}) \| < R$ for all $t\geq t_{\bar{z}}$.
Observing  the persistence diagrams along a trajectory results in a curve $\dgm(\varphi(t,\bar{z}))\in\per$.
In what follows we do not assume that we have knowledge of the
nonlinearity of $f$, or of the actual trajectories $\varphi(t,z)$; 
we are only
given the curves $\dgm(\varphi(t,\bar{z}))$ of persistence diagrams.

Even if the persistence diagram is constant, we cannot conclude that
the underlying differential equation has a fixed point.
As an example, consider a differential equation in $\R^3$ with a periodic solution in which the first coordinate $z_1=0$ is constant, and $(z_2,z_3)$ oscillates with the property that $1\leq z_2 \leq z_3$.
The associated curve in $\per$ consists of the constant persistence diagram $P=\setof{(0,\infty)}$.

However, Theorem~\ref{thm:maindyn} provides a scenario under which the observation of sufficiently many trajectories suggests the existence of a fixed point for the unknown ordinary differential equation that generates the dynamics.
More general theorems are possible and, as will be discussed in a later paper, these techniques can be lifted to the setting of partial differential equations defined on bounded intervals.
The purpose of this example is to emphasize the importance of Theorem~\ref{thm:main} from the perspective of data analysis.
Thus, we focus on a much more modest result.
We will show that if a particular type of neighborhood in $\per$ is positively invariant under the dynamics, i.e.\ if $\dgm(z)$ is in the neighborhood implies that $\dgm(\varphi(t,z))$ is in the neighborhood for all $t>0$, then there exists a fixed point for the differential equation that generates the dynamics.
To state and obtain such a result requires the introduction of additional notation,

\begin{definition}
A persistence diagram $P = \setof{p_m=(p_m^b,p_m^d): m = 1,\ldots, M}$ is \emph{sparse} if each persistence point is unique, i.e. $p_m \neq p_n$ for all $m\neq n$.
\end{definition}

Given a sparse persistence diagram we can choose $\mu >0$ such that $\| p_m-p_n\|_\infty \geq 4\mu$ for all $m\neq n$ and $|p_m^d - p_m^b| \geq 4\mu$ for all $m$.

\begin{example}
A sparse persistence diagram $Q$ is shown in Figure~\ref{fig:sparse}.
We can choose $\mu = 0.25$.
A possible critical value sequence associated to $Q$ is $\cv(z)=(3,4.5,1,3.5,2)$.
\end{example}

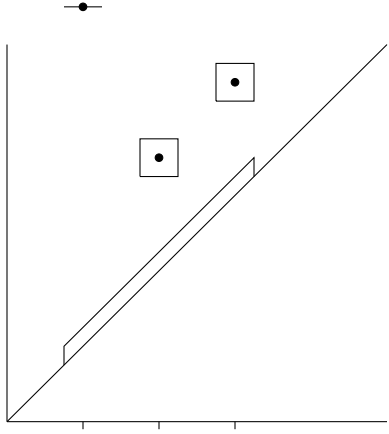
\begin{figure}
\begin{center}
    \begin{tikzpicture}
    \draw (0,0) -- (5,0);
    \draw (0,0) -- (0,5);
    \draw (0,0) -- (5,5);
    
    \draw (1,0) -- (1,-0.1);
    \draw [fill] (1,5.5) circle [radius=0.05];
    
    \draw (2,0) -- (2,-0.1);
    \draw [fill] (2,3.5) circle [radius=0.05];
    
    \draw (3,0) -- (3,-0.1);
    \draw [fill] (3,4.5) circle [radius=0.05];
    
    \draw (0.75,5.5)--(1.25,5.5);
    \draw (1.75,3.25)--(1.75,3.75)--(2.25,3.75)--(2.25,3.25)--(1.75,3.25);
    \draw (2.75,4.25)--(2.75,4.75)--(3.25,4.75)--(3.25,4.25)--(2.75,4.25);

    \draw (0.75,0.75)--(0.75,1)--(3.25,3.5)--(3.25,3.25);
    
	\end{tikzpicture}
    \caption{A sparse persistence diagram $Q$ with persistence points $\setof{(1,\infty),(2,3.5),(3,4.5)}$.  The boxes indicate the set $\sN_Q$ for $\mu = 0.25$.}
    \label{fig:sparse}
    \end{center}
\end{figure}

We use $\mu$ to define subsets of $\R^N$ and $\per$. 
We begin by constructing a subset of $\R^N$ using the set of cellular strings $\str(N,M)$. 
Choose a point $\hat{z}$
with persistence diagram $P$.
This gives rise to a fixed critical value sequence $\cv(\hat{z})$ and the associated component $C(\hat{z})\subset \R^N$ of $data_P$ is given by 
\[
C(\hat{z}) = \bigcup_{s\in \str(N,M)} T(s).
\]
By Theorem~\ref{thm:main}, $C(\hat{z})$ is a contractible union of polytopes.

Let $B_\mu(C(\hat{z}))\subset \R^N$ be the set of points that lie within a distance $\mu$ of $C(\hat{z})$ using the $\sup$-norm.
The bound on the choice of $\mu$ guarantees that if $s',s''\in \str(N,M)$ are of maximal dimension  and there does not exist $s\in \str(N,M)$ such that $s<s'$ and $s<s''$, then
$B_\mu(T(s'))$ and $B_\mu(T(s''))$
are disjoint. 
Therefore $B_\mu(C(\hat{z}))$ is contractible.

We now turn to the subset of $\per$.
For each $m=1,\ldots, M$ set 
\[
\sP_m := \setof{p=(p^b,p^d) : \|p-p_m\|_1 \leq \mu}
\]
and
\[
\sD := \setof{p=(p^b,p^d) : p^b \in [p_1^b-\mu, \sup\setof{p_m^b}+\mu]\ \text{and}\  0\leq p^d-p^b\leq \mu}.
\]
See Figure~\ref{fig:sparse}.
Define $\sN_P \subset \per$ to be the set of persistence diagrams generated by elements of $\R^N$ with the property that for each $m=1,\ldots, M$ there exists a unique persistence point in $\sP_m$ and any other persistence points lie in $\sD$.

These constructions allow us to prove the following theorem concerning the existence of fixed points of the unknown, underlying dynamical system $\varphi$.

\begin{theorem}
\label{thm:maindyn}
Consider a dynamical system generated by an ordinary differential equation that has a global compact attractor and whose trajectories are represented by $\varphi(t,z)$.
Let $P$ be a sparse persistence diagram and let $\sN_P$ be defined as above.
Assume that if $\dgm(\varphi(t_0,z))\in \sN_P$, then $\dgm(\varphi(t,z))\in \sN_P$ for all $t\geq t_0$.
Then, for each component of $\dgm^{-1}(\sN_P)\subset \R^N$ there exists a vector $\hat{z}$ such that $\dgm(\hat{z})\in\sN_P$ and $\varphi(t,\hat{z})=\hat{z}$ for all $t\in\R$, i.e.\ $\hat{z}$ is a fixed point for the dynamical system.
\end{theorem}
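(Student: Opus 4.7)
The plan is to apply the Lefschetz fixed-point theorem to the time-$T$ flow maps $\varphi_T$ restricted to a compact, contractible, forward-invariant subset $K$ of a component $U$ of $\dgm^{-1}(\sN_P)$, and then pass to the limit $T\to 0^+$ to obtain a common fixed point for all $\varphi_t$.

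First I would observe that each component $U$ of $\dgm^{-1}(\sN_P)$ is positively invariant: the hypothesis that $\sN_P$ is forward-invariant together with continuity of $t\mapsto\varphi(t,z)$ forces the orbit through any $z\in U$ to stay in the same component of $\dgm^{-1}(\sN_P)$. The existence of a compact global attractor $\cA$ provides a compact forward-invariant absorbing neighborhood $\cN\supset\cA$ in $\R^N$ (a standard consequence of dissipativity), and I set $K:=U\cap \cN$. By construction $K$ is compact and forward-invariant, and it contains the $\omega$-limit $\omega(z)\subset U\cap\cA$ of every $z\in U$.

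The main step is to show $K$ is contractible. Fix any $\hat z\in U\cap\cA$, which exists because $\omega(z)\subset U\cap\cA$ for every $z\in U$. By persistence stability, the sup-norm thickening $B_\mu(C(\hat z))$ of the component $C(\hat z)\subset data_P$ lies in $\dgm^{-1}(\sN_P)$, and the excerpt already verifies that it is contractible, using the $4\mu$-separation of the persistence points of $P$ to rule out collisions between thickened top-dimensional polytopes. Being connected and containing $\hat z$, the thickening lies in $U$. To promote this to a deformation retraction of all of $U$, I would build a \emph{noise-collapsing homotopy}: any $z\in U$ has critical value sequence consisting of $M$ primary extrema with birth--death pairs in the $\sP_m$ together with possibly many noise pairs whose births and deaths lie in $\sD$, and a piecewise-linear straight-line homotopy that simultaneously equates each noise pair of adjacent local extrema to its midpoint sweeps $z$ into $B_\mu(C(\hat z))$ while keeping the image inside $\dgm^{-1}(\sN_P)$ throughout. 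Executing this cell-by-cell on the polytope decomposition of Section~\ref{sec:polytopes} and gluing over intersections via the greatest-lower-bound property of $\str$ from Section~\ref{sec:cellular} yields a global deformation retraction $U\to B_\mu(C(\hat z))$; intersecting with $\cN$, which may be taken to be built from convex pieces, preserves contractibility, so $K$ inherits it.

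The endgame is then routine. For each $T>0$, $\varphi_T\colon K\to K$ is a continuous self-map of a compact contractible ANR, hence has Lefschetz number $1$ and a fixed point $z_T\in K$. Picking $T_n\to 0^+$ and extracting a convergent subsequence $z_{T_n}\to z^*\in K$, for an arbitrary $t>0$ I write $t=q_n T_n+r_n$ with $0\le r_n<T_n$; uniform continuity of $\varphi$ on $K$ together with $\varphi_{T_n}(z_{T_n})=z_{T_n}$ gives $\varphi(t,z^*)=\lim_n\varphi(r_n,z_{T_n})=z^*$, and invariance of $\cA$ extends this to all $t\in\R$. The principal obstacle is the contractibility argument for $U$: stability supplies $B_\mu(C(\hat z))\subset U$ cheaply, but the reverse retraction must continuously collapse arbitrary configurations of noise pairs back into the primary polytope structure without ever leaving $\sN_P$, and the required combinatorial bookkeeping is exactly what the cellular-string poset $\str$ is designed for, mirroring the contraction of $B\str$ in Section~\ref{sec:contract}.
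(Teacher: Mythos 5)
There is a genuine gap, and it sits exactly where you flag it: the contractibility of the component $U$ of $\dgm^{-1}(\sN_P)$ (equivalently of $K=U\cap\cN$). Your ``noise-collapsing homotopy'' is asserted, not proved, and it is a substantially stronger statement than anything established in the paper: Theorem~\ref{thm:main} describes the preimage $data_P=\dgm^{-1}(P)$ of a \emph{single} diagram, not the preimage of the neighborhood $\sN_P$, and Sections~\ref{sec:cellular}--\ref{sec:contract} give no retraction of $\dgm^{-1}(\sN_P)$ onto $B_\mu(C(\hat z))$. Concretely, as $z$ varies in $U$ the number, positions and interleaving of the ``noise'' extrema (those with persistence points in $\sD$) change: noise pairs are created and destroyed, and their heights can cross the levels of the primary extrema, so a prescription such as ``simultaneously equate each noise pair to its midpoint'' is not obviously continuous in $z$, nor is it clear the path stays in $\dgm^{-1}(\sN_P)$; the polytope decomposition and the poset $\str$ are built for a fixed critical value sequence and do not organize this larger set. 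In addition, ``intersecting with $\cN$ \ldots preserves contractibility'' is unsupported, and your Lefschetz step needs $K$ to be a compact ANR, which is also not verified. A smaller slip: you fix $\hat z\in U\cap\cA$ and then use $C(\hat z)\subset data_P$, but points of $U$ only have diagrams in $\sN_P$, not necessarily equal to $P$, so such a $\hat z$ requires an argument.

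The paper's proof avoids all of this by inverting your logic: instead of proving invariance of the big set $U$ and then trying to show the big set is nice, it takes the set already known to be nice, $B_\mu(C(\hat z))$ (contractible by the $4\mu$-separation argument preceding the theorem), and shows it is \emph{positively invariant}: if a trajectory starting in $B_\mu(C(\hat z))$ left it at time $t_1$, then by the stability theorem for persistent homology (bottleneck distance) there would be some $t_0\in(0,t_1]$ with $\dgm(\varphi(t_0,z))\notin\sN_P$, contradicting the hypothesis. You invoke stability only to get $B_\mu(C(\hat z))\subset\dgm^{-1}(\sN_P)$; making this invariance argument the central step is what lets the unknown topology of $\dgm^{-1}(\sN_P)$ drop out entirely. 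The paper then concludes via Conley index results (McCord--Mischaikow, and McCord's corollary of the Lefschetz theorem), with the compact global attractor guaranteeing that the maximal invariant set in the possibly unbounded region $B_\mu(C(\hat z))$ is compact. Your time-$T$-map-plus-limit endgame is fine in principle and could substitute for those citations once one has a compact, contractible, positively invariant ANR in hand, but as written your argument hinges on an unproven contractibility claim that the theorem does not actually require.
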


\begin{proof}
We begin with the observation that if $z\in B_\mu(C(\hat{z}))$ and there exists $t_1 >0$ such that $\varphi(t_1,z)\not\in B_\mu(C(\hat{z}))$, then there exists $t_0 \in (0,t_1]$ such that $\dgm(\varphi(t_0,z))\not\in \sN_P$. 
This follows from the stability theorem of persistent homology using the bottleneck distance \cite{stability}.
This contradicts the hypothesis, therefore, that $B_\mu(C(\hat{z}))$ is a contractible, positively invariant region under the dynamics.
By \cite[Proposition 3.1]{mccord:mischaikow}, the Conley index of the maximal invariant set is that of a hyperbolic attracting fixed point. 
By \cite[Corollary 5.8]{mccord} 
(which utilizes the well known Lefschetz fixed point theorem), 
the maximal invariant set in $B_\mu(C(\hat{z}))$ contains a fixed point.
\end{proof}

\section{Conclusion and Future Work}
\label{sec:conclusion}

To the best of our knowledge, this paper provides the first detailed analysis of the topology of the preimage of a persistence map.
Although we have presented the results in the context of sublevel set filtrations, the same arguments can be applied in the setting of superlevel set filtrations.
The only significant change is that one needs to use $101$ cellular strings; see Definitions~\ref{def:cv} and \ref{def:cellular}.

Theorem~\ref{thm:maindyn}, and the use of persistence diagrams to obtain results about the dynamics of an ODE, may appear somewhat artificial. 
However, consider a PDE, such as a reaction diffusion equation, defined on an interval.
A finite spatial sampling of the solution at a time point gives rise to a vector.
We can think of this vector as arising from two different proceedures: (i) numerical, e.g.\ the values of an ODE derived from a Galerkin approximation to the PDE, or (ii) experimental, e.g.\ a pixelated image of the solution.
Theorem~\ref{thm:maindyn} is applicable in both cases, and one expects that for fine enough discretization or resolution that the results of Theorem~\ref{thm:maindyn} will be applicable to the PDE.
The example involving images brings us much closer to current treatments of complex spatio-temporal dynamics \cite{kramar:levanger:tithof:2015,levanger:xu:cyranka:2019}.
Hence, the natural next step in our research is to obtain an analogous result about existence of fixed points for one-dimensional PDEs whose trajectories are observed in the persistence space.

Finally, the obvious open question as a result of this paper is: 
given a $d$-dimensional simplicial complex $\mathcal{S}$ with a function $f$, similar in form to that of Definition~\ref{def:Sfiltration}, can one determine the homology of components of the pre-image of a persistence diagram?

\bibliography{persistence_fp}
\bibliographystyle{abbrv}

\end{document}